
\documentclass[a4paper,11pt]{amsart}

\usepackage{latexsym}
\usepackage{amssymb,amsmath, amsthm}
\usepackage{graphics}
\usepackage{url}
\usepackage{booktabs}  

\newtheorem{theorem}{Theorem}[section]
\newtheorem{corollary}[theorem]{Corollary}
\newtheorem{proposition}[theorem]{Proposition}
\newtheorem{conjecture}[theorem]{Conjecture}
\newtheorem{problem}[theorem]{Problem}

\newtheorem{lemma}[theorem]{Lemma}

\theoremstyle{definition}

\newtheorem{example}[theorem]{Example}

\newtheorem*{strategy}{Strategy}
\newtheorem{thmmain}{Theorem}

\input{xy}
\xyoption{all}

\newcommand{\N}{\mathbf{N}}



\renewcommand{\epsilon}{\varepsilon}



\linespread{1.132}



\newcounter{thmlistcnt}
	{\setcounter{thmlistcnt}{0}%
	\begin{list}{\emph{(\roman{thmlistcnt})}}{%
		\usecounter{thmlistcnt}%
		\setlength{\topsep}{0pt}%
		\setlength{\leftmargin}{36pt}%
		\setlength{\labelwidth}{17pt}%
		\setlength{\itemsep}{0pt}%
		\setlength{\itemindent}{0pt}}%
	}%
	{\end{list}}%

\renewcommand{\c}[1]{#1^{\raisebox{1pt}{$\hskip0.5pt\scriptstyle\star$}}}
\newcommand{\n}[1]{#1}

\usepackage{caption}
\captionsetup{width=.95\textwidth} 
\captionsetup{font=small}

\begin{document}
\title[Searching for knights and spies]{Searching for 
knights and spies: a
majority/minority game}
\date{\today}
\author{Mark Wildon}
\maketitle
\thispagestyle{empty}

\begin{abstract}
There are $n$ people, each of whom
is either a knight or a spy. 
It is known that at least $k$ knights are present, where $n/2 < k < n$.
Knights always tell the truth.
We consider both spies who always lie and spies who answer as they see fit. 
This paper determines the number of questions required to find a spy 
or prove that everyone in the room is a knight.
We also determine
the minimum number of questions needed to find at least one person's identity,
or a nominated person's identity, 
or to find a spy (under the assumption that a spy is present).
For spies who always lie, we prove that
these searching problems, and the problem of finding a knight, can be solved by a simultaneous optimal strategy. We also
give some computational results on the problem of finding all identities
when spies always lie, and end by stating some open problems.
\end{abstract}

\section{Introduction}

In a room there are $n$ people, numbered from $1$ up to $n$.
Each person is either a \emph{knight} or a \emph{spy}, and will answer any question of the form
\begin{center}`Person $x$, is person $y$ a spy?'
\end{center}
Knights always answer truthfully. We shall consider both 
spies who always lie, and unconstrained spies 
who  lie or tell the truth as they see fit. We work in
the adaptive model in which future questions may be chosen
in the light of the answers to earlier questions. 
We always assume that knights are in a strict majority, since otherwise, even if every
permitted question is asked, it may be impossible to be certain of anyone's identity.

In this paper we determine the number of questions that are necessary and sufficient to find a spy,
or to find at least one person's identity, or to find an identity of a specific person,
nominated in advance. We also survey the existing work on
the problems of finding a knight or finding everyone's identity, and prove two theorems
showing the extent to which the problems considered in this paper admit a common solution. 
In the final section we state some open problems suggested by the five main theorems
and present some computational results on the problem of finding all identities when spies always lie.
A recurring theme is that early accusations are  very helpful when finding
spies, since at least one of the  people involved must be a spy.

We work in the general setting, also considered in \cite{Aigner}, 
where it is known that at least $k$ of the $n$ people are knights, where $n/2 < k < n$.
Throughout~$n$ and $k$ have these meanings.
For 
spies who always lie, let
\begin{itemize}
\item[$\bullet$] $\n{T}_L(n,k)$ be the number of questions that
are necessary and sufficient either to
identify a spy, or to make a correct claim that everyone in the
room is a knight;
\item[$\bullet$] $\c{T}_L(n,k)$ be the number of questions that are necessary
and sufficient
to identify a spy, if it is known
that at least one spy is present. 
\end{itemize}
Our first main result is proved in \S\ref{sec:liars}.

\begin{thmmain}\label{thm:liars}
Let $n = q(n-k+1) + r$ where $0 \le r \le n-k$. Then
\begin{align*}
\n{T}_L(n,k) &= \begin{cases} n-q  + 1 & \text{if $r=0$} \\ 
n-q & \text{if $r = 1$} \\
n-q & \text{if $r \ge 2$}  \end{cases}          
\intertext{and}\\[-30pt]
\c{T}_L(n,k) &= \begin{cases} n-q   & \text{if $r=0$} \\
                                    n-q & \text{if $r=1$} \\    
                            n-q-1  & \text{if $r \ge 2$.}
              \end{cases} 
\end{align*}
with the single exception that $\c{T}_L(5,3) = 4$.
\end{thmmain}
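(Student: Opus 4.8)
The plan is to recast the questioning as a combinatorial game and to treat $\n{T}_L$ and $\c{T}_L$ together. The key first observation is that, since knights always tell the truth and spies always lie, the reply of person $x$ to the question whether person $y$ is a spy tells us exactly whether $x$ and $y$ have the same type (a negative reply) or different types (an affirmative reply, an accusation, so that one of $x$, $y$ is a spy). Hence after any set of questions the information available is a graph on the $n$ people with edges labelled \emph{same} or \emph{different}, and a candidate assignment of types is any knight/spy labelling consistent with these edge-labels and with having at most $m:=n-k$ spies (for $\c{T}_L$ one also requires at least one spy). From this one reads off the two ways to finish: the strategist can be certain that everyone is a knight exactly when no \emph{different} edge has appeared and every \emph{same}-component has at least $m+1$ vertices; and the strategist can name a spy with certainty exactly when some person is a spy in every candidate assignment. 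The number $q=\lfloor n/(m+1)\rfloor$ enters because a graph on $n$ vertices all of whose components have size at least $m+1$ has at most $q$ components and hence at least $n-q$ edges.

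For the upper bounds I would give an explicit strategy of roughly the following shape: organise the people into $q$ blocks of $m+1$ together with $r$ leftover people, and inside a block ask a path of $m$ questions. A block whose replies are all \emph{same} is thereafter a \emph{confirmed} block of knights, being too large to consist of spies, while any \emph{different} reply, wherever it occurs, produces an accusing pair which is resolved by one further question put to a knight of an already-confirmed block. A routine count then gives $\n{T}_L(n,k)\le n-q$ when $r\ne 0$ and $\le n-q+1$ when $r=0$, the extra question for $r=0$ being forced because there the last accusing pair that can arise always joins two pieces each too small to certify a knight. Under the hypothesis that a spy is present one saves a question: if all blocks turn out to consist of knights the remaining spy is forced among the $r$ leftover people, so only $r-1$ further questions are needed (this gives the saving when $r\ge 2$); and when $r=0$ one instead examines the final block by asking a confirmed knight about its members, so that the last unexamined person is deduced to be the spy rather than queried. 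The bookkeeping has to be carried out separately in each residue class, and with particular care in the case where a \emph{different} reply arrives inside the first block, before any knight has been confirmed.

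For the lower bounds I would run the adversary that defers accusations as long as possible: it answers \emph{same} --- thereby keeping the all-knights assignment valid but not the unique valid one in the $\n{T}_L$ case, and keeping valid two assignments with disjoint spy-sets in the $\c{T}_L$ case --- until the strategist's next question would otherwise let it finish, and only then answers \emph{different}. A short analysis of how the set of candidate assignments (and the structure of their spy-sets) contracts after each reply shows the strategist cannot finish before the \emph{same}-components, or in the $\c{T}_L$ case the small \emph{same}-components, have been brought to their required sizes, and that in the residue classes where the final forced \emph{different} reply cannot be resolved for free this costs one more question; this produces the matching lower bounds. The exceptional value $\c{T}_L(5,3)=4$ --- where $q=1$, $r=2$, so the formula would predict $3$ --- I would settle by a direct argument or small exhaustive check: after two questions the adversary can always present three candidate assignments whose spy-sets are pairwise disjoint and cannot be merged, and no single further (binary) question separates three such possibilities, whereas for all larger parameters the extra people let the strategist build a confirmed block quickly enough that this obstruction does not arise.

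I expect the main obstacle to be the lower bound, and specifically tuning the adversary so that it forces precisely the stated constant in every residue class: the extra question when $r=0$, the absence of any saving when $r=1$, and the $(5,3)$ exception are three genuinely different phenomena, and one must then verify that the block strategy is exactly tight against this adversary in each case.
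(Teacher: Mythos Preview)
Your lower bound is essentially the paper's: the adversary supports for as long as possible, leaving small components whose identities are ambiguous, and accuses at the end only when forced to in the $r=0$ (respectively $r\le 1$) case. The paper's write-up is terser than yours but the mechanism is identical.

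Your upper bound, however, takes a genuinely different route from the paper, and as stated it has a real gap. The paper does \emph{not} build $q$ blocks of size $m+1$ by paths. Instead it first finds a knight in $K(n,k)=2m-B(m)$ questions using a ``Binary Knight Hunt'' on a set of $2m+1$ people (pairing up components of equal size and merging), augmented by a ``Switching Knight Hunt'' that keeps components small; only then does it use that knight to resolve an accusation or to sweep the remaining components. The point of this machinery is that it produces a certified knight \emph{even when accusations arrive early}, and does so within the question budget.

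Your block/path strategy lacks exactly this robustness. You write that an accusing pair ``is resolved by one further question put to a knight of an already-confirmed block'', and you flag the case of an accusation in the first block as needing ``particular care''. But the problem is broader than the first block: whenever $q\le m$, the adversary can place one accusation in \emph{every} block and still stay within the spy budget, so that after your $qm$ block questions you have no confirmed block at all and hence no knight to consult. Concretely, take $n=8$, $k=5$ (so $m=3$, $q=2$, $r=0$, target $7$): if the adversary confirms block~$1$ and then answers $5\!-\!6$ same, $6\!-\!7$ same, $7\!-\!8$ different, you need a seventh question as planned; but if instead the adversary accuses inside block~$1$, your stated rule has nothing to invoke. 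In some such positions one can still deduce a spy from the global constraint $\sum(\text{spies per block})\le m$, but this is not automatic and is certainly not a ``routine count''. The extreme case is $q=1$, i.e.\ $n=2m+1$: here there is only one block, any accusation in it leaves you with no knight, and a path gives you no efficient way to recover one. The paper treats $n=2m+1$ by an entirely separate argument (a Binary Knight Hunt, with further special handling when $m$ is a power of two, and the hand check for $(5,3)$); your sketch does not cover this.

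In short: your outline will not go through as written without importing some device that finds a knight cheaply in the presence of accusations. Either adopt the paper's Binary Knight Hunt (or an equivalent), or replace your ``particular care'' remark with an explicit recovery procedure and a proof that it meets the bound for all $q\le m$, including $q=1$.
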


In particular, we have $\n{T}_L(n,k) = \c{T}_L(n,k) + 1$ except when
$(n,k) = (5,3)$ or $n = q(n-k+1) + 1$ for some $q \in \N$; in these 
cases equality holds.

Let $\c{T}_S(n,k)$ and $\n{T}_S(n,k)$ be the analogously defined numbers if spies are unconstrained.
In this setting we prove the following result in \S\ref{sec:spies}.

\begin{thmmain}\label{thm:spies}
We have $\c{T}_S(n,k) = n-1$ and $\n{T}_S(n,k) = n$.
\end{thmmain}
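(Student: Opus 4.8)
The plan is to establish matching lower and upper bounds, handling the quantity $\c{T}_S(n,k)$ (find a spy, one being guaranteed) and $\n{T}_S(n,k)$ (find a spy or certify everyone is a knight) in parallel, since the two strategies are closely related.

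\emph{Lower bounds.} I would use the crude adversary that answers every question ``no'', i.e.\ always reports that the person asked about is a knight. Against a fixed questioning strategy this yields a fixed transcript. For a person $j$ that is never named as the object of a question, the world $W_j$ in which $j$ is the unique spy is consistent with an all-``no'' transcript (each answer is either a truthful ``no'' by a knight speaking about someone $\ne j$, or the answer of the spy $j$), and it is admissible because $n-1\ge k$ and there is one spy; the all-knight world is consistent as well. After $n-1$ questions some person $j$ is never an object, so $W_j$ and the all-knight world both survive, and since they disagree on whether a spy is present, no strategy can either name a spy or certify ``all knights'': hence $\n{T}_S(n,k)\ge n$. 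After only $n-2$ questions at least two people $j_1,j_2$ are never objects, so $W_{j_1}$ and $W_{j_2}$ both survive; as no single person is a spy in both, no strategy can name a spy even under the promise that one exists, giving $\c{T}_S(n,k)\ge n-1$.

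\emph{Upper bound for $\c{T}_S$.} The plan is a ``chain of vouchings'': ask person $1$ about person $2$, and while the answer is ``no'' continue with person $2$ about person $3$, then person $3$ about person $4$, and so on. If all $n-1$ answers are ``no'' then, were person $1$ a knight, the chain would force every person to be a knight, contradicting the presence of a spy, so person $1$ is a spy. If the first ``yes'' occurs when person $i$ accuses person $i+1$, then the no-chain $1\to2\to\cdots\to i$ shows that either person $1$ is a spy or persons $1,\dots,i$ are all knights and hence person $i+1$ is a spy; so the pair $\{1,i+1\}$ contains a spy. Using the remaining questions I would interrogate people outside this pair about person $1$: among the people outside $\{1,i+1\}$ the knights form a strict majority (this is the one place $k>n/2$ is used), so for $i$ small the majority answer is truthful and either exposes person $1$ or, being ``no'', leaves person $i+1$ as the spy. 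For $i$ large one instead argues directly: if person $i+1$ were a knight then person $i$ accused a knight and is a spy, so person $i-1$ vouched for a spy and is a spy, and so on down to person $1$, forcing more than $n-k$ spies --- impossible --- so person $i+1$ is a spy.

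\emph{Upper bound for $\n{T}_S$ and the main obstacle.} For $\n{T}_S$ I would run the same chain with one extra question in hand: if the chain returns only ``no'' answers, ask person $n$ about person $1$, and argue that ``no'' forces the all-knight world (any spy-world would, propagating backwards along the chain, force everyone to be a spy) while ``yes'' certifies that person $1$ is a spy; if the chain ever produces a ``yes'' then a spy is present and the spare question beyond the $\c{T}_S$ budget is more than enough to finish as above. I expect the real difficulty to lie in the upper bound for $\c{T}_S$: the bookkeeping that keeps the total at exactly $n-1$ when the first accusation is late. The simple majority argument only covers an initial range of $i$ and the counting argument only a final range, and for boundary values of $k$ there is an intermediate band of $i$ left over; I would handle this band with an extra round --- for instance, identifying two people forced to share a type, merging them and recursing --- and would organise the whole upper bound as an induction on $n$, paying attention to the small cases (where one should also check that, unlike Theorem~\ref{thm:liars}, no exceptional pair $(n,k)$ arises).
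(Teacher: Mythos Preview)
Your lower bounds are correct and are exactly the paper's argument.

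The upper bound, however, has a genuine gap. Your majority-vote plan presumes you can poll all $n-2$ people outside $\{1,i+1\}$ about Person~$1$; but after the chain you have only $n-1-i$ questions left, so this is possible only when $i=1$. If you instead poll just the $n-1-i$ fresh people $\{i+2,\ldots,n\}$, then knights are a strict majority among them only when $i\le 2k-n$; meanwhile your counting argument (``$i+1$ knight $\Rightarrow$ $1,\ldots,i$ all spies'') needs $i>n-k$. In the hardest case $n=2k-1$ these ranges are $i\le 1$ and $i\ge k$, leaving the whole interval $2\le i\le k-1$ uncovered. The suggested ``merge and recurse'' does not obviously close this: the chain $1\to 2\to\cdots\to i$ does \emph{not} force $2,\ldots,i$ to share Person~$1$'s type when spies are unconstrained (it only does so in the event that $1$ is a knight), so there is no clean pair to merge, and you have not specified an invariant that survives the recursion within the $n-1$ budget. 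Your $\n{T}_S$ argument inherits the same gap, since after an accusation it defers to the unfinished $\c{T}_S$ strategy.

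The paper uses the same Phase~1 chain but a different Phase~2 that sidesteps the problem entirely. Rather than voting on Person~$1$, it treats the \emph{accuser} $p$ as a candidate already supported by $p-1$ people and accused by one, and continues asking fresh people about $p$; the instant accusers exceed supporters the whole component (candidate, supporters, accusers) is discarded and a new candidate is chosen from the untouched people, with the spy budget reduced accordingly. Because at discard time accusers equal supporters${}+1$, each discarded component carries at least as many spies as knights, so the process must terminate with a certified knight; this knight has either been accused (giving a spy) or can identify the Phase~1 accuser in one more question. The question graph stays a forest throughout, which is what keeps the total at $n-1$. This iterated-candidate idea is the missing ingredient in your proposal.
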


Note that, in contrast to $\c{T}_L(n,k)$ and $\n{T}_L(n,k)$, the numbers
$\c{T}_S(n,k)$ and $\n{T}_S(n,k)$ are independent of $k$.
Theorems~\ref{thm:liars} and~\ref{thm:spies} are proved in \S\ref{sec:liars} and~\S\ref{sec:spies} below.
The proof of the lower bound needed for Theorem~\ref{thm:liars} has some features
in common with Theorem~4 in \cite{Aigner}: we connect these results in \S\ref{subsec:all}.

To state the third main theorem we must introduce eight further numbers.
For 
spies who always lie, let 
\begin{itemize}
\item $K_L(n,k)$ be the number of questions that are necessary and sufficient
to find a knight;
\item $E_L(n,k)$ be the number of questions that are necessary and sufficient
to find at least one person's identity; 
\item $N_L(n,k)$ be the 
number of questions that are necessary and sufficient to identify Person $1$.
\end{itemize}
Let $K_S(n,k)$, $E_S(n,k)$ and $N_S(n,k)$ 
be the analogously defined numbers when spies are unconstrained.
Let 
$\c{N}_L(n,k)$
and $\c{N}_S(n,k)$ be the numbers corresponding to $N_L(n,k)$ and $N_S(n,k)$
defined on the assumption that a spy is present.
Let $B(s)$ be the number of $1$s in the binary expansion of $s \in \N$.
In~\S\ref{sec:knights} we prove the following theorem.

\begin{thmmain}\label{thm:knights} We have
\[ K_S(n,k) = K_L(n,k) = E_S(n,k) = E_L(n,k) = 2(n-k) - B(n-k) \]
and the same holds for the corresponding numbers defined on the assumption that a spy
is present. Moreover
\[  N_S(n,k) = N_L(n,k) = \c{N}_S(n,k) = \c{N}_L(n,k) = 2(n-k) - B(n-k) + 1. \]
with the exception that $\c{N}_L(n,k) = 2(n-k) - B(n-k)$ when $n = 2^{e+1}+1$
and $k = 2^{e}+1$ for some $e \in \N$.
\end{thmmain}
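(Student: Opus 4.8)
The plan is to deduce the whole theorem from two inequalities, using only the evident monotonicity among the eight quantities $\c{K}_L,\c{K}_S,\c{E}_L,\c{E}_S,K_L,K_S,E_L,E_S$: the smallest is $\c{E}_L(n,k)$ (identifying a knight is one way to identify someone, knowing a spy is present only helps, and a strategy that works against unconstrained spies works against liars), and the largest is $K_S(n,k)$. So it suffices to prove $K_S(n,k)\le 2(n-k)-B(n-k)$ and $\c{E}_L(n,k)\ge 2(n-k)-B(n-k)$. The first is essentially the known questioner's strategy for finding a knight (surveyed above); all of its deductions have the one-sided shape ``if $x$ is a knight then \dots'', so it is valid even against unconstrained spies and bounds $K_S$ as well. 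Write $m=n-k$ and $f(m)=2m-B(m)$; the identity $f(m)=f(\lfloor m/2\rfloor)+m$ (equivalently $f(m)=(2^{e+1}-1)+f(m-2^{e})$ with $2^{e}$ the top bit of $m$) governs what follows.

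For the lower bound I would let the adversary always lie, so that after any run of answered questions the state is a signed graph on $\{1,\dots,n\}$: an edge is ``same type'' for a ``not a spy'' answer and ``opposite type'' for an accusation, each component $C$ carries a consistent $2$-colouring with parts of sizes $a_C\ge b_C$, and I set $\delta_C=a_C-b_C$ (isolated vertices being components with parts $1,0$). A short computation shows that some person's identity is forced exactly when $\Phi:=\sum_C b_C+\max_C\delta_C>m$, and that at the first such moment a forced knight always exists -- which is exactly why ``finding a knight'' and ``finding someone'' have the same cost. The adversary must then answer so as to keep $\Phi\le m$ while keeping the state consistent with some assignment having between $1$ and $m$ spies; the heuristics are to answer ``not a spy'' on a question about two fresh people (creating a pure component rather than conceding a cheap $b_C=1$), to defer accusations (so that once $\Phi$ nears its ceiling some $b_C$ is already positive and the present-spy promise is automatic, while before that the isolated vertices keep an all-but-one-knight assignment available), and to rebalance a component when cornered. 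The heart of the matter is that this can be sustained for exactly $f(m)-1$ questions and no longer: this is where the $-B(m)$ correction appears, and I would prove it by an induction on $m$ following $f(m)=f(\lfloor m/2\rfloor)+m$. Pinning this down for every parity of $m$, rather than settling for the weaker bound $2m$ that a single growing component would give, is the main obstacle I foresee.

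For $N$ and $\c{N}$, the upper bound $N_S(n,k)\le f(m)+1$ comes from running the knight-finding strategy to obtain a person $p$ certified to be a knight in at most $f(m)$ questions and then asking ``$p$, is Person $1$ a spy?'': the answer is truthful and settles Person $1$. For the matching lower bound, note that in the signed-graph state Person $1$ is forced precisely when $\sum_C b_C+\delta_{C(1)}>m$, where $C(1)$ is \emph{Person $1$'s own} component -- its imbalance, not the maximum imbalance. So a lying adversary that, besides keeping $\Phi\le m$, keeps $\delta_{C(1)}$ as small as possible can afford one more question than in the ``someone'' problem; this gives $\c{N}_L(n,k)\ge f(m)+1$, and hence the lower bound for all four of $N_L,N_S,\c{N}_L,\c{N}_S$.

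Finally, the exception. When $m=2^{e}$ and $n=2m+1$ -- that is, $n=2^{e+1}+1$ and $k=2^{e}+1$, the smallest $n$ the hypotheses allow -- the population is too small for the present-spy lying adversary to protect $C(1)$ for the whole game: carrying the ``keep $\delta_{C(1)}$ small'' rule through $f(m)$ questions would corner it into the all-knight assignment, which its promise forbids, so it must reveal Person $1$ one question sooner and $\c{N}_L(n,k)=f(m)=2(n-k)-B(n-k)$. The remaining three quantities keep the $+1$: for $N_L$ and $N_S$ the adversary may use the all-knight assignment as a real threat, and for $\c{N}_S$ the unconstrained-spy state space is richer than a signed graph and still lets the adversary stall. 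The substantial work is therefore: the two adversary-longevity computations (the two appearances of $-B(m)$), the verification that protecting $C(1)$ is affordable off the exceptional pair $(n,k)$, and the direct analysis of that exceptional pair.
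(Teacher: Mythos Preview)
Your overall architecture matches the paper's: reduce the eight $K/E$ quantities to the chain $\c{E}_L \le \cdots \le K_S$ and squeeze; then get the $N$ quantities by adding one question on top. Your observation that, with lying spies, the first moment any identity is forced is also a moment a knight is forced is exactly the paper's switching argument (if a spy is identifiable in a component then the larger part of that component is forced to be knights), just phrased through your potential $\Phi$.

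Two substantive differences and one gap are worth flagging.

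\emph{The $K_L$ lower bound.} You propose to reprove $K_L(n,k)\ge 2(n-k)-B(n-k)$ via an adversary that keeps $\Phi\le m$ for $f(m)-1$ steps, by induction on $m$ using $f(m)=f(\lfloor m/2\rfloor)+m$. The paper does not reprove this; it cites it (Saks--Werman, and the corrected argument in Britnell--Wildon). There is nothing wrong with redoing it, but be aware that it is the genuinely delicate part and your sketch (``support on fresh pairs, defer accusations, rebalance when cornered'') is not yet a proof.

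\emph{The $N_L$ lower bound.} Here the paper's route is cleaner than yours and avoids a second hard adversary analysis. Rather than designing an adversary that simultaneously keeps $\Phi\le m$ and $\delta_{C(1)}$ small for $f(m)$ rounds, the paper simply invokes the known $K_L$ adversary for the first $f(m)-1$ questions (so no knight is found), and then on question $f(m)$ plays a single tailored move: support if Person~$1$ lies in the two components being merged, accuse otherwise. A short weight computation (Lemmas~3.1 and~4.1) shows Person~$1$ is still ambiguous. This one-question trick saves you from having to prove your ``keep $\delta_{C(1)}$ small'' adversary is actually compatible with the $\Phi\le m$ adversary for the full $f(m)$ rounds.

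\emph{The exceptional case.} This is a genuine gap in your proposal. To show $\c{N}_L(2^{e+1}+1,\,2^e+1)=f(m)$ rather than $f(m)+1$, you need an \emph{upper bound}: an Interrogator strategy that determines Person~$1$ in $f(m)=n-2$ questions against every lying Spy Master, under the promise that a spy is present. Your paragraph only argues that your particular adversary strategy breaks down in this case; that merely shows one lower-bound argument fails, not that the Interrogator can win. The paper supplies the missing strategy explicitly (Lemma~4.2): run a Binary Knight Hunt on Persons $2,\ldots,n$; if no accusation ever occurs then all of $2,\ldots,n$ share a type, so by the spy-present promise Person~$1$ is the spy; if an accusation occurs when two size-$2^f$ components merge, discard that weight-$0$ component and observe the remaining weights are those of a smaller Binary Knight Hunt, yielding a knight in at most $n-3$ questions, who then identifies Person~$1$ by question $n-2$. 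You should add an argument of this shape. Similarly, for $\c{N}_S=f(m)+1$ in this case you need a concrete reason the unconstrained Spy Master survives question $f(m)$; the paper's is that in the ``support'' branch a source vertex of the merged component (which does not contain Person~$1$) may be a truthfully-answering spy.
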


The numbers $K_S(n,k)$ and $K_L(n,k)$ have already been studied.
If spies always lie,
then one person supports
another if and only if they are of the same type and accuses if and only
if they are of different types. 
Therefore finding a knight when spies always lie 
is equivalent to the \emph{majority game} of
identifying a ball of a majority colour
in a collection of $n$ balls coloured with two colours, 
using only binary comparisons between pairs of balls that result in
the information `same colour' or
`different colours'.
For an odd number of balls, the relevant part of Theorem~\ref{thm:knights} is 
that $K_L(2k-1,k) = 2(k-1) - B(k-1)$.
This result was first proved by 
Saks and Werman in~\cite{SaksWerman}. A particularly elegant
proof was later given by Alonso, Reingold and Schott
in~\cite{AlonsoEtAl}.
In Theorem 6 of \cite{Aigner}, Aigner
adapts the questioning strategy introduced in
\cite{SaksWerman} to show that $K_S(n,k) \le 2(n-k) - B(n-k)$.
We recall Aigner's questioning strategy and the proof of this result in \S 2 below.
Aigner also claims a proof, based on Lemma~5.1 in \cite{Wiener}, that~$K_L(n,k) \ge 2(n-k) - B(n-k)$.
A flaw in these proofs
was pointed out in \cite{BritnellWildonMajority}, and a correct proof was given.
It is obvious that $K_S(n,k) \ge K_L(n,k)$, so it follows that
$K_S(n,k) = K_L(n,k) = 2(n-k) - B(n-k)$,
giving part of Theorem~\ref{thm:knights}. 

It is 
natural to ask whether when there are questioning strategies
that solve the searching problems considered in Theorems~\ref{thm:liars},~\ref{thm:spies}
and~\ref{thm:knights} simultaneously. In \S\ref{sec:liarscomb} we prove
that, perhaps surprisingly, there is such a strategy when spies always lie.
Define $K(n,k) = 2(n-k) - B(n,k)$.

\begin{thmmain}\label{thm:liarscomb}
Suppose that  spies always lie. 
There is a questioning strategy that will find
a knight by question $K(n,k)$, find Person $1$'s identity 
by question $K(n,k) + 1$ and by question $\n{T}_L(n,k)$
either find a spy or prove that everyone in the room is a knight. 
Moreover if a spy is known to be present then a spy will be
found by question $\c{T}_L(n,k)$.
\end{thmmain}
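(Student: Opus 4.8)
The plan is to exhibit a single strategy $\mathcal{S}$ by refining the strategy used to prove the upper bounds of Theorem~\ref{thm:liars} so that its opening moves also realise Aigner's knight-finding strategy from \S2. The ingredients are: a set of $n-k+1$ people who all support one another cannot consist entirely of spies, so is a set of knights; an accusation certifies that the accusing pair contains a spy; and once some spy-containing pair has been produced \emph{and} a knight is known, one more question --- that knight being asked about a member of the pair --- exhibits a specific spy. Throughout I maintain a ``residual spy budget'' $b$, equal to $n-k$ minus the number of spies already located inside accused pairs; at every stage a same-type set of more than $b$ people disjoint from the located spies must be a set of knights. The one structural adjustment is that whenever a same-type set is to be certified it is assembled by repeatedly merging two already-certified blocks of equal size (comparing one representative of each) instead of by a single chain; this changes no question counts but makes the certification process of $\mathcal{S}$ an instance of Aigner's.

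Granting that $\mathcal{S}$ is a refinement of the strategy realising the bounds of Theorem~\ref{thm:liars}, those bounds --- $\n{T}_L(n,k)$ for finding a spy or proving everyone a knight, and $\c{T}_L(n,k)$ for finding a spy when one is present --- come for free. The content is therefore the other two assertions, and the crux is the claim that the certification steps of $\mathcal{S}$ can be scheduled so as to coincide, move for move, with Aigner's strategy until a person is certified a knight. Granting this, a knight is named by question $K(n,k)=2(n-k)-B(n-k)$, which is best possible by Theorem~\ref{thm:knights}; and then, if Person~$1$'s identity is not already determined (it is determined if Person~$1$ lies in a certified knight-set, or has been named as a spy), question $K(n,k)+1$ has that knight asked about Person~$1$, which by truthfulness settles it. This matches $N_L(n,k)=K(n,k)+1$.

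The main obstacle is reconciling two question-counting clocks on the one strategy: the run of $\mathcal{S}$ must reach its first knight-certification by step $2(n-k)-B(n-k)$ --- the step at which the binary expansion of $n-k$, and hence $B(n-k)$, governs the count --- while still clearing all $n$ people for the spy-or-all-knights problem within $n-q$ or $n-q+1$ steps, where $q=\lfloor n/(n-k+1)\rfloor$. The mechanism that makes this possible is that, once $b$ has been pinned down, ``assemble a same-type set of $b+1$ people and merge it against the knights already certified'' converts essentially one untouched person into a certified knight per question, while every accusation simultaneously produces a spy and shortens the remaining work by consuming budget; so after an Aigner-like opening, $\mathcal{S}$ empties the room at exactly the rate the $\n{T}_L(n,k)$ bound permits. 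I expect most of the proof to be a case analysis on the residue $r=n-q(n-k+1)$ and on the question at which the first accusation occurs, with the small exceptional configurations --- in particular $(n,k)=(5,3)$, and the case $r=1$ where $\c{T}_L(n,k)=\n{T}_L(n,k)$ --- treated directly, exactly as in Theorems~\ref{thm:liars} and~\ref{thm:knights}. Lastly, the extra question saved in the spy problem when a spy is guaranteed present comes from omitting the final ``confirmation'' question of the spy-or-all-knights run --- the one that rules out the last same-type set being all spies --- which is redundant once that set is known to contain a spy; the exceptional cases are precisely those in which the spy-or-all-knights run has no such redundant last question.
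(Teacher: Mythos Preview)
Your proposal is a sketch rather than a proof, and the central assertion --- that the opening of the Theorem~\ref{thm:liars} strategy can be scheduled to coincide ``move for move'' with Aigner's Binary Knight Hunt --- is precisely the hard part, and is in fact false as you state it. The Binary Spy Hunt of \S\ref{subsec:liars} does \emph{not} open with a Binary Knight Hunt on $2s+1$ people. Phase~1 performs two \emph{separate} Binary Knight Hunts, one in a set $X$ of size $s+1$ and one in a set $X'$ of size $s$. If no accusations occur, this leaves components whose sizes are the binary parts of $s+1$ in $X$ and of $s$ in $X'$; several of these sizes coincide, so a genuine Binary Knight Hunt would have merged them and cannot yet terminate. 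The paper bridges the gap with a different mechanism, the Switching Knight Hunt (Lemma~\ref{lemma:switch}), which links these components by $d-1$ further questions while keeping every component contained in $X$ or in $X'$. That this totals exactly $K(n,k)$ is the content of Proposition~\ref{prop:BSHknight}, proved via the identity $2s+1 - (d+a_1) = 2s - B(s)$; your remark that the adjustment ``changes no question counts'' hides this computation. Without the containment conclusion of Lemma~\ref{lemma:switch} you cannot guarantee the component structure Phase~3 needs, so the $\n{T}_L$ and $\c{T}_L$ targets do not come for free.

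Your treatment of Person~$1$ also has a gap. Simply spending question $K(n,k)+1$ to ask the knight about Person~$1$ may put you one question behind the spy-finding schedule (for instance when $q=2$, $r=0$, $B(s)=1$, where $\c{T}_L(n,k) = K(n,k)+1$ exactly). The paper avoids this by placing $1 \in X$ and always preferring questions to Person~$1$ in Phase~1; it then argues that by question $K(n,k)$ Person~$1$ is either in the knight's component (so already identified) or in an accusatory component (so asking the knight about Person~$1$ simultaneously identifies a spy, costing nothing against the spy-finding clock). You would need an argument of this kind, and you have not supplied one. Finally, the case $n = 2s+1$ with $s$ a power of two is genuinely exceptional and needs the separate strategy of \S\ref{subsec:liarsexcep}; it does not fall out of a residue-$r$ case analysis.
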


We also show in \S\ref{sec:liarscomb}
that by asking further questions it is possible to determine
all identities by question $n-1$.
In the important special case where $n=2k-1$, so all
that is known is that knights are in a strict majority, there are $2^{n-1}$ possible
sets of spies, and so $n-1$ questions are obviously necessary to determine all identities.
Thus in this case, 
all five problems admit a simultaneous optimal solution.
We make some further remarks on finding all identities when spies always 
lie, and ask a natural question suggested by Theorem~\ref{thm:liarscomb},
 in the final section of this paper. 

For unconstrained spies it is impossible in general to solve the four problems
by a single strategy. The following theorem, proved in \S\ref{sec:spiescomb}, shows one obstruction.

\begin{thmmain}\label{thm:spiescomb}
Suppose that 
spies are unconstrained.
There is a questioning strategy that will find a knight by
question $K(n,k) + 1$, find Person $1$'s identity by question $K(n,k) + 2$,
and by question $\n{T}_S(n,k) = n$ either find a spy, or prove
that everyone in the room is a knight. 
Moreover, if a spy is known to be present,
then a spy will be found by question $\c{T}_S(n,k) = n-1$.
When $n=7$ and $k=4$ there is no questioning strategy that will both find a knight by
question $K_S(7,4) = 4$ and find a spy by question $\c{T}_S(n,k) = 6$.
\end{thmmain}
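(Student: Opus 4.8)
The plan is to build one strategy by refining the spy\-/finding strategy $\mathcal S$ that is used in \S\ref{sec:spies} to prove Theorem~\ref{thm:spies}; thus $\mathcal S$ already either finds a spy or proves everyone is a knight by question $\n{T}_S(n,k)=n$, and finds a spy by question $\c{T}_S(n,k)=n-1$ when one is present. We run $\mathcal S$ while carrying along two extra pieces of data: the current status of Person~$1$, and a record of whether any accusation has been made. Two elementary certificates are used throughout: if Person~$x$ answers that Person~$y$ is a spy then at least one of $x,y$ is a spy; if Person~$x$ answers that Person~$y$ is a knight then $y$ is a knight whenever $x$ is. It remains to show that, after at most a cosmetic reordering of its first questions, $\mathcal S$ has a knight pinned down by question $K(n,k)+1$ and Person~$1$'s identity pinned down by question $K(n,k)+2$; the $n$ and $n-1$ deadlines are then exactly those of Theorem~\ref{thm:spies}, and one extra question suffices for Person~$1$ once a knight is known.

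To locate the knight early I would induct on $n-k$ and split on the first accusation. If an accusation flagging a pair $\{x,y\}$ occurs at question~$j$, then among the remaining $n-2$ people at least $k-1$ are knights, and $\mathcal S$ continues as the corresponding strategy for the instance $(n-2,k-1)$; since $K(n-2,k-1)=K(n,k)-2$, the inductive hypothesis yields a knight by question $j-1+K(n,k)$, which is within budget provided the first accusation surfaces by question~$2$ — this is arranged by having $\mathcal S$ begin with questions whose only non\-/accusing answer keeps building structure. If no accusation is ever made, every answer is a support; the support relation is then a forest of directed chains, and within any chain the spies form a prefix, so — because $n<2k$ forces the total spy budget below every chain length that can matter, exactly as in the chain computation sketched above — the tip of a sufficiently long chain is a forced knight, and this occurs after at most $K(n,k)$ supports. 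The delicate point, which is the only real content of the positive direction, is the bookkeeping that all four stopping times are simultaneously respected by a single strategy: one must verify that $\mathcal S$ can indeed be taken to build one long support chain before any accusation is seen, and that in each of the two regimes the deadlines $K(n,k)+1$, $K(n,k)+2$, $n$ and $n-1$ dovetail.

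\textbf{The case $(n,k)=(7,4)$.}
Here $K_S(7,4)=4$ and $\c{T}_S(7,4)=6$. Suppose, for a contradiction, that some strategy $\sigma$ finds a knight by question~$4$ and finds a spy by question~$6$ whenever a spy is present. The argument is an adversary argument in three steps. First, pin down the shape of $\sigma$'s first four questions: a knight must be forced along every answer\-/branch, and — by the two certificates above — a knight cannot be forced out of an accusation without simultaneously certifying a genuinely smaller instance, so $\sigma$ must react to each accusation by recursing; in particular, along the all\-/support branch its four questions form a digraph in which some vertex has at least three ancestors (this is precisely what $K_S(7,4)=4$ buys us), and along branches containing accusations the pattern is correspondingly rigid. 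Second, play against $\sigma$ the adversary that answers ``supports'' to each of the first four questions: in the resulting state exactly one knight is forced, no spy is forced, and the support digraph has several sources, each giving a distinct consistent configuration with a single spy. Third, show that from such a state no two further questions can isolate a spy: whatever $\sigma$ asks at questions~$5$ and~$6$, the adversary can keep answering ``supports'', switching to ``accuses'' only when ``supports'' would corner it, so that at least two consistent configurations with disjoint spy\-/sets survive; then no person is a spy in all consistent configurations, so no spy has been found. The main obstacle is the interaction of the first and third steps — one must verify that the adversary's invariant ``at least two consistent configurations with no common spy'' is robust through questions~$5$ and~$6$ precisely when $\sigma$ has been forced to spend all of questions~$1$--$4$ on the knight, and my attempts show that this fails for strategies that build long accusation\-/free chains but that those very strategies then fail to force a knight by question~$4$. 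Making this rigorous amounts to a finite case analysis over the possible shapes of a knight\-/by\-/$4$ strategy for $(7,4)$, reduced by the $S_7$\-/symmetry of the problem and by the structural certificates, checking in each case that questions~$5$ and~$6$ leave the spy under\-/determined.
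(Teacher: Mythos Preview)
Your argument for the positive assertions has a genuine gap. The inductive step rests on the identity $K(n-2,k-1)=K(n,k)-2$, but this is false: writing $s=n-k$ one has $K(n,k)-K(n-2,k-1)=2+B(s-1)-B(s)=v_2(s)+1$, which equals $2$ only when $s\equiv 2\pmod 4$. Even granting the correct value, the recursion gives a knight by question $j+K(n-2,k-1)+1$, and this meets the target $K(n,k)+1$ only when $j\le v_2(s)+1$. You cannot arrange this: the Spy Master, not the Interrogator, chooses when the first accusation falls, and against your chain $\mathcal S$ the Spy Master may support $s-1$ times and accuse on question $s$, after which your recursion discards the $s-1$ supports and overshoots. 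The paper's fix is a different construction, the \emph{Modified Binary Knight Hunt}: one runs a Binary Knight Hunt inside a set $X$ of size $2^{a+1}$ (with $2^a\le s<2^{a+1}$), but connects sink vertices to \emph{source} vertices so that each component in $X$ is a directed path. If all answers are supports, the sink of the resulting path of length $2^{a+1}>s$ is a forced knight after $2^{a+1}-1\le K(n,k)$ questions. If an accusation occurs, every earlier question still connected two equal-size components, so one simply disregards the single misdirected edge and completes an ordinary Binary Knight Hunt in $K(n,k)+1$ questions. The point is that the pre-accusation questions are never wasted; in your scheme they are.

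For the $(7,4)$ impossibility your plan is in the right spirit but your adversary is not. You propose to answer `supports' throughout questions $1$--$4$ and then argue that two further questions cannot locate a spy. The paper instead first reduces to the majority game to show that every knight-by-$4$ strategy must at some step connect two components of size $2$, and then lets the Spy Master react to the \emph{direction} of that edge: if it enters a source vertex he \emph{accuses}, and (because spies are unconstrained) the resulting four-vertex component is strictly weaker than a weight-$0$ component, so the Interrogator cannot recover a knight by question $4$; if it enters a sink vertex he supports, a knight may be revealed, but then the unexamined source vertices leave the spy underdetermined after two more questions. Your all-supports adversary handles only the second regime, and your Step~1 does not establish that a knight-by-$4$ strategy must avoid the first.
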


There is an adversarial game associated to each of our theorems,
in which questions are put by an \emph{Interrogator} and answers are
decided by a \emph{Spy Master}, whose task is to ensure that the Interrogator
asks at least the number of questions claimed to be necessary. 
We shall use
this game-playing setup without further comment. 
We represent
 positions part-way through a game by a 
 \emph{question graph},
with vertex set $\{1,2,\ldots, n\}$, in which 
there is a directed edge from $x$ to $y$
if Person $x$ has been asked about Person~$y$, labelled by Person $x$'s reply. 
We rule out loops by making the simplifying assumption that no-one is ever asked
for his own identity: such questions are clearly pointless.
In figures, accusations are shown by
dashed arrows and supportive statements by solid arrows.
Since each question reduces the number of components in the question
graph by at most one, it takes $n-c$ questions to form a question graph with
 $c$ or fewer components. We shall use this observation many times below.
 
 \subsection*{Outline}
We remind the reader of the structure of the paper: 
\S\ref{sec:BKH} gives a
basic strategy for finding a knight. In \S\ref{sec:liars}, \S\ref{sec:knights} and~\S\ref{sec:liarscomb} 
we prove Theorems~\ref{thm:liars},~\ref{thm:knights} and~\ref{thm:liarscomb}. 
In Theorems~\ref{thm:liars} and~\ref{thm:liarscomb} spies always lie, and this
is also the most important case for Theorem~\ref{thm:knights}. 
In \S\ref{sec:spies} and~\S\ref{sec:spiescomb}
we prove Theorems~\ref{thm:spies} and~\ref{thm:spiescomb} on unconstrained spies.
In \S 8 we give some  computational results and state some open problems.

\section{Binary Knight Hunt}\label{sec:BKH}

This questioning strategy was introduced in Theorem~6 of \cite{Aigner}. (The 
present name is the author's invention.)
We shall use variants of it 
in the proofs of Theorems~\ref{thm:liars},~\ref{thm:knights},
~\ref{thm:liarscomb} and~\ref{thm:spiescomb}. See Example~\ref{ex:ex}
for its use in the strategy used to prove Theorem~\ref{thm:liarscomb}.

\begin{strategy}[\bf Binary Knight Hunt]
The starting position is a set $P$ of people, none of whom has
been asked a question or asked about. After each question,
every component $C$ of the question graph that is contained in $P$ has 
a unique \emph{sink vertex} which
can be reached by a directed path from any other vertex in $C$.
To decide on a question:

\begin{itemize}
\item If the components in $P$
in which no accusation has been made all have different sizes, the
strategy terminates. 

\item Otherwise, the Interrogator 
chooses two components $C$ and $C'$ in $P$ of equal size in which no accusation has
been made. If $C$ has sink vertex $x$ and $C'$ has sink vertex $x'$, then
he asks Person $x$ about Person $x'$, forming a new component with sink vertex $x'$.
\end{itemize}
\end{strategy}

We call components in which an accusation has been made \emph{accusatory}. Since
anyone who supports a spy (either directly, or via a directed path of supportive
edges) is a spy, and each
accusatory component is formed by connecting two sink vertices in components of equal
size with no accusations, each accusatory
component contains at least as many spies as knights.

The Binary Knight Hunt is immediately effective when knights are in a strict majority in $P$.
Note that after each
question, each component in~$P$ has size a power of two.
Suppose that
when the strategy terminates, there are non-accusatory components of
distinct sizes $2^{b_1}, 2^{b_2}, \ldots, 2^{b_u}$ where $b_1 < \ldots < b_u$.
Since there are at least as many spies as knights in each accusatory component,
and $2^{b_u} > 2^{b_1} + \cdots + 2^{b_{u-1}}$,
the person corresponding to the sink vertex of the component of
size $2^{b_u}$ must be a knight. If the
accusatory components have sizes
$2^{a_1}, \ldots, 2^{a_t}$ and $m = |P|$ then
\[ m = 2^{b_1} + \cdots + 2^{b_u} + 2^{a_1} + \cdots + 2^{a_t}. \]
Hence $t + u \ge B(m)$. The number of questions asked is therefore 
at most $m-B(m)$.
In the usual room of $n$ people known to contain at least~$k$ knights, any set of $2(n-k)+1$
people has a  strict majority of knights. 
Thus, as proved by Aigner in \cite[Theorem~6]{Aigner}, $2(n-k) - B(n-k)$ 
questions suffice to find a knight, even when spies
are unconstrained. 

\section{Proof of Theorem~\ref{thm:liars}}\label{sec:liars}

Throughout this section we suppose that spies always lie.
Let $s = n-k$. By hypothesis
there are at most $s$ spies in the room and $n = q(s+1) + r$, where $0 \le r \le s$.
Each component $C$ of the question graph has a partition $Y$, $Z$,
unique up to the order of the parts, such that the people in $Y$ and $Z$ have opposite identities.
Choosing $Y$ and $Z$ so that $|Y| \ge |Z|$, we define the \emph{weight} of $C$ to be $|Y| - |Z|$.
The multiset of  component weights then encodes exactly the same information as the
 `state vector' in \cite[page 5]{Aigner} or 
the `game position' in \cite[Section~2]{BritnellWildonMajority}, 
\cite[page~384]{SaksWerman} and \hbox{\cite[Section~3]{Wiener}}.
The following lemma also follows from any of these papers, and is proved here only for completeness.

\begin{lemma}\label{lemma:weight}
Suppose that spies always lie. Let $C$ and $C'$ be components in a question graph of weights $c$, $c'$
respectively, where $c \ge c' \ge 1$. Let Persons~$v$ and $v'$ be in the larger parts of the partitions
defining the weights~$c$ and $c'$. Suppose that Person $v$ is asked about Person $v'$, forming a new component
$C \cup C'$. If Person $v$ supports~Person $v'$ then the weight of $C\hskip1pt \cup C'$ is $c + c'$ and 
if Person~$v$ accuses Person~$w$ then the weight of $C \cup D$ is $c-c'$.
\end{lemma}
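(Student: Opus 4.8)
The plan is to analyse the question in which Person $v$ is asked about Person $v'$ by cases according to the two possible replies, tracking how the partition of the new component $C \cup C'$ is assembled from the partitions of $C$ and $C'$. Write $C$ as the disjoint union $Y \cup Z$ with $|Y| - |Z| = c$ and $v \in Y$, and similarly $C' = Y' \cup Z'$ with $|Y'| - |Z'| = c'$ and $v' \in Y'$; recall that within each component the two parts consist of people of opposite identities, so $Y$ and $Y'$ each consist of people all of the same (unknown) type, as do $Z$ and $Z'$.

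First I would handle the supportive reply. Since spies always lie, Person $v$ supports Person $v'$ precisely when $v$ and $v'$ have the same identity; hence the union of the two components is partitioned into one part $Y \cup Y'$ and the other part $Z \cup Z'$, with the people in $Y \cup Y'$ all of one type and those in $Z \cup Z'$ all of the other. The new weight is therefore $\bigl||Y \cup Y'| - |Z \cup Z'|\bigr| = \bigl|(|Y|+|Y'|) - (|Z|+|Z'|)\bigr| = c + c'$, as claimed; note $|Y|+|Y'| \ge |Z|+|Z'|$ since $c, c' \ge 0$, so no absolute-value subtlety arises and the larger part of the new partition is $Y \cup Y'$.

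Second I would handle the accusatory reply. Here $v$ and $v'$ have opposite identities, so the new component is partitioned with $Y$ grouped together with $Z'$ (all of one type) and $Z$ grouped together with $Y'$ (all of the other type). The weight is then $\bigl||Y \cup Z'| - |Z \cup Y'|\bigr| = \bigl|(|Y|-|Z|) - (|Y'|-|Z'|)\bigr| = |c - c'| = c - c'$, using the hypothesis $c \ge c'$. (When $c = c'$ the new weight is $0$ and the component is balanced, which is consistent with the statement.) This matches the claim, modulo the evident typo in the excerpt where "$C \cup D$" should read "$C \cup C'$" and "accuses Person $w$" should read "accuses Person $v'$"; I would correct these in the write-up.

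There is no genuine obstacle here: the lemma is purely bookkeeping about how the opposite-identity partitions merge, and the only point requiring the slightest care is keeping the roles of the larger and smaller parts straight so that the two arithmetic identities come out with the correct signs. I would present the two cases in about half a page, with a sentence each recalling that "spy always lies" converts "same identity $\leftrightarrow$ support" and "different identity $\leftrightarrow$ accusation". If desired, a small figure showing the two merged partitions would make the argument transparent, but it is not necessary.
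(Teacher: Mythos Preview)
Your proposal is correct and follows essentially the same approach as the paper's proof: both set up the partitions $Y,Z$ of $C$ and $Y',Z'$ of $C'$ with $v\in Y$, $v'\in Y'$, observe that the merged partition is $Y\cup Y',\,Z\cup Z'$ in the supportive case and $Y\cup Z',\,Z\cup Y'$ in the accusatory case, and read off the weights. Your write-up is slightly more explicit about the absolute-value check and correctly flags the typos in the statement, but the argument is the same.
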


\begin{proof}
Let $Y$, $Z$ and $Y'$, $Z'$ be the unique partitions of $C$ and $C'$ respectively such that
the people in $Y$ and $Z$ have opposite identities, the people in $Y'$ and $Z'$ have 
opposite identities, and $|Y| - |Z| = c$, $|Y'| - |Z'| = c'$. By assumption $v \in Y$ and $v' \in Y'$.
The unique partition of $C \cup C'$ into people of 
opposite identities is $Y \cup Y'$, $Z\cup Z'$ if
Person $v$ supports Person $v'$, and $Y \cup Z'$, $Z \cup Y'$ if Person $v$ accuses Person $v'$.
The lemma follows.
\end{proof}

\subsection{\sc Lower bounds}\label{subsec:liarslower}
It will be convenient to say that a component in
the question graph is \emph{small} if it contains at most $s$ people.
The identities of people in a small component are ambiguous.

Suppose that it is not known whether a spy is present. 
The Spy Master should answer the first $n-q-1$ questions asked
by the Interrogator with supportive statements. 
After question $n-q-1$ there are
at least $q+1$ components in the question graph, of which at least one  is small.
Hence $\n{T}_L(n,k) \ge n-q$.
Moreover, if $r=0$ then, after question $n-q-1$,
there are at least two small components, say $X$ and~$Y$. If question $n-q$ 
connects~$X$ and~$Y$ then the Spy Master should accuse, otherwise he supports.
In either case at least one more question is required, and so $\n{T}_L(n,k) \ge n-q+1$
when~$r=0$.

The proof is similar if it is known that a spy is present. The Spy Master answers
the first $n-q-2$ questions with supportive statements. This leaves at least $q+2$ components
in the question graph. If $r=0$ or $r=1$ then at least three of these
components are small, and otherwise
at least two are small. The Interrogator is unable to find a spy after $n-q-2$ questions. 
Hence $\c{T}_L(n,k) \ge n-q-1$. Now suppose that $r=0$ or $r=1$. If question $n-q-1$
is between two small components, say $C$ and $C'$, then
the Spy Master should accuse; otherwise he supports.
In the first case it is ambiguous which of $C$ and $C'$ contains spies,
and in the second case there remain two small components and no accusations have been made.
Hence $\c{T}_L(n,k) \ge n-q$ in these cases.

\subsection{\sc Upper bound when $n \not= 2s+1$}\label{subsec:liars}
We start with a questioning strategy which allows
the Interrogator to find a knight 
while keeping the components in the question graph
small.  
See Example~\ref{ex:ex} for an example of the strategy in this context of Theorem~\ref{thm:liarscomb}.

\begin{strategy}[\bf Switching Knight Hunt]
Let $d \in \N$. Let $c_1, \ldots, c_d \in \N$ be 
such that $c_{j+1} > c_1 + \cdots + c_{j}$ for each $j \in \{1,\ldots,d-1\}$.
The starting position for a Switching Knight Hunt is a question graph $G$ 
having distinguished 
components $C_1, C_2, \ldots, C_d$ 
and $C'_2, \ldots, C'_d$ such that
\begin{itemize}
\item[(a)] $C_1$ has weight $c_1$,
\item[(b)] both 
$C_i$ and $C'_i$ have weight $c_i$ for all $i \in \{2,\ldots,d\}$,
\item[(c)] each~$C_i$ has a vertex $p_i$ and each $C_i'$ has
a vertex~$p_i'$, both in the larger part of the partitions defining
the weights of these components.
\end{itemize}
Set $b=1$.

\medskip
\noindent\emph{Step 1.}
If $b = d$ then terminate. Otherwise,
ask Person $p_{b+1}$ about Person~$p_b$, then Person $p_{b+2}$ about Person~$p_{b+1}$, and so on,
stopping either when an accusation is made, or when Person $p_d$ supports Person $p_{d-1}$.
In the latter case the strategy terminates. 
If Person $p_{b+j}$ accuses Person $p_{b+j-1}$
then replace $b$ with $b+j$ and go to Step $1'$.

\medskip
\noindent\emph{Step $\mathit{1'}$.}
This is obtained from Step 1 by swapping $p_i$ for $p_i'$ in all cases, and going to Step $1$
if the strategy does not terminate.
\end{strategy}

\begin{lemma}\label{lemma:switch}
Let $P$ be a set of people in which knights are in a strict majority.
Suppose that $P$ has components $C_1, \ldots, C_d$ and $C_2', \ldots, C_d'$
satisfying the conditions for a Switching Knight Hunt. Let $X = C_1 \cup \cdots \cup C_d$
and $X' = P \backslash X$. Suppose that $X'$ is a union of components of the question graph
and that the components in $X'$ other than
$C_2', \ldots, C_d'$ have total weight at most $c_1-1$.
Let $G$ be the question graph when a Switching Knight Hunt terminates.
If the strategy terminates in Step $1$ then Person $p_d$ is a knight, 
and if the strategy terminates in Step $1'$ then Person $p_d'$ is a knight.
Moreover 
each component in $G$ is either contained in $X$
or contained in $X'$.
\end{lemma}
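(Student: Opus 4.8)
The plan is to track two invariants through the Switching Knight Hunt: first, that the questions asked always stay "inside $X$" (i.e.\ connect components all of which lie in $X$), so that $X'$ remains a union of components throughout and the final claim about components of $G$ is immediate; and second, that the running variable $b$ always satisfies the key inequality $c_b > c_1 + \cdots + c_{b-1} + (c_1 - 1)$ so that, by Lemma~\ref{lemma:weight}, the sink vertex of the "current big component" is forced to be a knight once the strategy terminates. Concretely, I would argue by induction on the number of questions asked, simultaneously for Step~$1$ and Step~$1'$ (they are mirror images, so one argument suffices with the roles of $p_i$ and $p_i'$ swapped).

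First I would set up the bookkeeping. When we are at Step~$1$ with parameter $b$, the component containing $p_b$ has weight $c_1 + c_2 + \cdots + c_b$: this is because the merges performed so far are exactly a sequence of supports along the chain $p_1, p_2, \ldots, p_b$ (accusations only occur at the moment $b$ is updated and we switch steps), and Lemma~\ref{lemma:weight} gives the weight as the sum of the $c_i$ when the larger parts are glued by a support. At the analogous point in Step~$1'$, the component containing $p_b'$ has weight $c_1 + \cdots + c_b$ for the same reason, the chain now running through the primed vertices from the index where we last switched. When Person $p_{b+j}$ accuses Person $p_{b+j-1}$, Lemma~\ref{lemma:weight} tells us the merged component has weight $(c_1 + \cdots + c_b) - c_{b+1} - \cdots - c_{b+j-1}$ \emph{minus} the smaller weight $c_{b+j}$; since $c_{b+j} > c_1 + \cdots + c_{b+j-1}$ this is negative in absolute value, i.e.\ after the accusation the \emph{primed} component $C'_{b+j}$ (weight $c_{b+j}$, untouched so far) dominates, which is exactly why the strategy then pursues the primed chain starting from $p'_{b+j}$. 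Thus the invariant "$b$ is the index of the largest $c$ we have committed to, and the component through $p_b$ (or $p_b'$) has weight $c_1 + \cdots + c_b$" is maintained.

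Next I would verify the two conclusions. For the component structure: every question is of the form "ask $p_{b+1}$ about $p_b$" (or the primed version), and all the $C_i, C_i'$ lie in $X$; an easy induction shows the only components ever modified are sub-components of $X$, so $X'$ stays a union of components and each component of the final graph $G$ lies wholly in $X$ or wholly in $X'$. For the knight conclusion: when the strategy terminates in Step~$1$, Person $p_d$ supported Person $p_{d-1}$ and the component through $p_d$ has weight $c_1 + \cdots + c_d$. The remaining weight in $P$ outside this component is at most $(c_2 + \cdots + c_d)$ from the untouched $C'_i$ plus at most $c_1 - 1$ from the other components of $X'$, i.e.\ at most $c_1 + \cdots + c_d - 1$, which is strictly less than the weight of $p_d$'s component. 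Since knights are in a strict majority in $P$, the majority identity in $P$ is that of knights; because one component's weight exceeds the total weight of everything else, the larger part of that component must consist of knights, so its sink vertex $p_d$ is a knight. The Step~$1'$ case is identical with $p_d'$.

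The main obstacle I expect is getting the weight accounting exactly right across a switch from Step~$1$ to Step~$1'$ — in particular checking that after the accusation the primed component genuinely dominates and that the "leftover weight" bound $c_1 - 1$ (which came from the hypothesis on the non-distinguished components of $X'$, together with the \emph{discarded} branch of $X$) is still valid as $b$ increases. One has to be careful that when we abandon the chain $p_1, \ldots, p_{b+j-1}$ after an accusation, that abandoned component, though it now sits "inside $X$", contributes at most $c_1 + \cdots + c_{b+j-1} - c_{b+j} < 0$ in absolute weight, hence at most $c_{b+j} - 1 - (c_1 + \cdots + c_{b+j-1})$ \dots{} in fact one checks its absolute weight is at most $c_{b+j-1} + \cdots + c_1 < c_{b+j}$, and bundling it with the previously allowed $c_1 - 1$ still leaves total leftover weight below $c_1 + \cdots + c_d$. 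Making this "the dominant component always wins by a strict margin" argument clean — essentially the same geometric-series idea used for the Binary Knight Hunt in \S\ref{sec:BKH} — is the crux; once it is in place the rest is routine induction.
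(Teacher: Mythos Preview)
Your overall plan---show that the final component containing $p_d$ has weight strictly exceeding the combined weight of everything else in $P$---is the same as the paper's, but the bookkeeping you propose is wrong in several places, and the gap is exactly where the real work lies.

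First, a factual slip: the $C'_i$ do \emph{not} lie in $X$; they lie in $X' = P\setminus X$. Step~$1$ questions stay in $X$ and Step~$1'$ questions stay in $X'$, which is why no component of $G$ straddles the two sets---but your stated reason (``the only components ever modified are sub-components of $X$'') is false.

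More seriously, your central invariant is false. You claim that when the strategy is in Step~$1$ (or~$1'$) with parameter $b$, the component through $p_b$ (or $p'_b$) has weight $c_1+\cdots+c_b$. But after a switch this resets: when Step~$1'$ begins at $b$, the vertex $p'_b$ sits in the \emph{untouched} component $C'_b$ of weight $c_b$, and after subsequent supports the weight grows to $c_b + c_{b+1} + \cdots$, not $c_1 + \cdots + c_b$. In particular, at termination the component through $p_d$ has weight $c_{u'_{2t}} + c_{u'_{2t}+1} + \cdots + c_d$, where $u'_{2t}$ is the index of the last switch back to Step~$1$---not $c_1+\cdots+c_d$ as you assert. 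Your final inequality (remaining weight $\le c_1+\cdots+c_d - 1$) is therefore comparing the wrong quantities.

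What actually has to be done---and what you correctly flag as ``the crux'' but then do not carry out---is to bound the knight-minus-spy excess in \emph{all} components other than $p_d$'s. These fall into three kinds: the accusatory components left behind at each switch (weight $c_\beta - (c_{\beta-1}+\cdots+c_\alpha)$ for the run from $\alpha$ to $\beta$); the $C_j$ or $C'_j$ that are skipped over entirely and remain untouched (these exist in the gaps between switch indices); and the non-distinguished components of $X'$ with total weight at most $c_1-1$. The paper groups the contributions from each pair of consecutive switches and observes that they telescope to $c_\gamma - c_\alpha$; summing and adding the leftover pieces gives exactly $c_d + \cdots + c_{u'_{2t}} - 1$, one less than the weight of $p_d$'s component. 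Your sketch neither identifies the untouched $C_j$, $C'_j$ in the gaps nor performs this telescoping, and your attempted handling of a single abandoned chain in the final paragraph (with the phrase ``negative in absolute value'') is garbled. You need to do this accounting explicitly, tracking all three kinds of leftover component across all switches.
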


\begin{proof}
We suppose that the Switching Knight Hunt terminates in Step $1$. (This happens when either
$d=1$, or Person $p_d$ supports Person $p_{d-1}$, or Person $p_d'$ accuses Person $p_{d-1}'$ and there
is a final switch.) The proof in the other
case is symmetric. Suppose that Persons $p_{u_1}$, $p_{u_2'}$, \ldots, $p_{u_{2t-1}}$, $p_{u_{2t}'}$
make accusations, where
$u_1 < u_2' < \ldots < u_{2t-1} < u_{2t}'$. Set \hbox{$u'_0 = 1$}.
After the final question, 
the component of $G$ containing Person $p_{u_i}$ 
is contained
in $X$ and, by Lemma~\ref{lemma:weight}, has weight $c_{u_{i}} - (c_{u_{i}-1}  + \cdots + c_{u_{i-1}'})$.
Similarly
the component of $G$ containing Person $u_i'$ 
is contained in~$X'$ and has
weight $c_{u'_i} - (c_{u'_{i}-1} + \dots + c_{u_{i-1}})$.

Fix $i \in \{1,\ldots, t\}$ and let $u'_{2i-2} = \alpha$, $u_{2i-1} = \beta$ and $u'_{2i} = \gamma$.
The difference between the number of knights
and the number of spies in the components 
$C_{\gamma-1}, \ldots, C_\alpha$ and
$C'_\gamma, \ldots, C'_{\alpha+1}$ of the original question graph is at most
\begin{align*}
& c_{\gamma-1}  + \cdots + c_{\beta+1} + \bigl(c_\beta -  
 ( c_{\beta-1} + \cdots + c_{\alpha+1} + c_\alpha) \bigr) \\
&\hspace*{0.5in} {}+{} \bigl( c_\gamma  - ( c_{\gamma-1}  + \cdots + c_{\beta+1} + c_\beta) \bigr) {}+{} 
 c_{\beta-1} {}+ \cdots + c_{\alpha+1},
\end{align*}
where the top line shows contributions from components in $X$, and the bottom line contributions
from components in $X'$.
This expression simplifies to $c_\gamma - c_\alpha$.  
Hence the difference between
the number of knights and the number of spies in all components of $G$ contained in $P$
\emph{except} for the component containing Person $p_d$, is at most
\[ \sum_{i=1}^t (c_{u'_{2i}} - c_{u'_{2i-2}}) + (c_{d} + \cdots + c_{u_{2t}'+1}) + (c_1-1) = c_{d} + \cdots 
+ c_{u'_{2t+1}} + c_{u'_{2t}} - 1\]
where the second two summands on the left-hand side come from components in $X'$.
The component containing Person $p_d$ has weight $c_d + \cdots + c_{u'_{2t}}$. If the people
in the larger part of this component are spies then spies strictly outnumber knights in $P$,
a contradiction. Hence Person $p_d$ is a knight.
\end{proof}

We remark that in some cases, depending on the structure of the components
$C_i$ and $C_i'$, and provided Persons $p_i$ and $p_i'$ are chosen appropriately,
the Switching Knight Hunt may be effective even when spies are unconstrained.
For example, this is the case in Example~\ref{ex:ex}.

We are now ready to give a questioning strategy that meets the targets set for $\c{T}_L(n,k)$ and
$\n{T}_L(n,k)$ in Theorem~\ref{thm:liars}.
In outline: the Interrogator finds a knight in $K(n,k)$ questions
while also attempting to create $q$ components of size~\hbox{$s+1$} or more,
each with no accusatory edges. 
If he fails in creating these components it is because of an earlier accusation;
asking the knight about the accuser then identifies a spy.
Remarks needed to show that the strategy is well-defined are given in square brackets.

\begin{strategy}[\bf Binary Spy Hunt]
Take a room of $n$ people known to contain at most $s$ spies where $2(s+1) \le n$.
Let $2^{a_1} + 2^{a_2} +  \cdots + 2^{a_d}$ where $d = B(s+1)$ and $a_1 < \ldots < a_d$ be
the binary expansion of $s+1$.

\medskip
\noindent\emph{Phase 1.} Choose disjoint subsets $X$, $X' \subseteq \{1,2,\ldots, n\}$ such
that $|X| = s+1$ and $|X'| = s$. Perform a Binary Knight Hunt in $X$ and then perform a~Binary 
Knight Hunt in $X'$. [The questions asked are consistent with an incomplete
 Binary Knight Hunt in $X \cup X'$.]

\begin{itemize}
\item[(i)]
If an accusation has been made, complete a Binary Knight Hunt
in $X \cup X'$. Then choose any person, say Person $z$, who made an accusation
in Phase $1$, and terminate after asking the knight just found about Person~$z$.

\smallskip
\item[(ii)] If all answers so far have been supportive, go to Phase~2.
\end{itemize}

\medskip
\noindent\emph{Phase 2.} 
\noindent
[The components of the question graph in $X$
have sizes $2^{a_1}, \ldots, 2^{a_d}$. 
Since $s = (1 + \cdots + 2^{a_1-1}) + 2^{a_2} +
\cdots + 2^{a_d}$, there are components in $X'$ of sizes $2^{a_2}, \ldots, 2^{a_d}$.
No accusations have been made so far, hence the size of each component is equal to its weight.]
Perform a Switching Knight Hunt in $X \cup X'$ and go to~Phase~3.

\medskip
\noindent\emph{Phase 3.} 
[By Lemma~\ref{lemma:switch}, each component of the question graph
is either a singleton, or contained in $X$ or contained in~$X'$. There
are $(q-2)(s+1) + r+1$ singleton components not contained in $X \cup X'$.]
Let Person $w$ be the knight found at the end of~Phase~2.
Ask questions  to create $q-1$ components of
size $s+1$, one component of size $s$, and $r+1$ singleton components.
At the first accusation, stop building components and 
ask Person $w$ about the person who made the accusation. Then terminate.
(Thus Phase~3 ends after one question
if there was an accusation in Phase~2.) If no accusations are made,
go to Phase~4.

\medskip
\noindent\emph{Phase 4.} Let Persons $x_1, \ldots, x_{r+1}$ be in the
singleton components of the graph. Let Person $y$ be in the component
of size $s$.

\begin{itemize}
\item[$\bullet$] If $r=0$ then ask Person $w$ about Person $x_1$. 
If he accuses, Person~$x_1$ is a spy. If he supports, and a spy
is known to be present, then Person $y$ is a spy. Otherwise
asking Person $w$ about Person $y$ either shows that Person $y$ is a spy,
or proves that no spies are present.

\smallskip
\item[$\bullet$] If $r \ge 1$ then ask Person $y$ about Person $x_{r+1}$.
If he accuses then asking Person $w$ about Person $y$ identifies a spy. Otherwise
ask Person $w$ about Persons $x_1$, \ldots, $x_{r-1}$. Any accusation
identifies a spy. Suppose that all these people turn out to be knights.
If a spy is known to be present, then Person $x_{r}$ is a spy;
otherwise asking Person $w$ about Person $x_{r}$  either shows that Person $x_{r}$ is a spy,
or proves that no spies are present.
\end{itemize}

\end{strategy}

In Example~\ref{ex:ex} the Binary Spy Hunt is shown ending in Phase~4.

\begin{lemma}\label{lemma:BSHworks}
Let $s = n-k$ and suppose that $n \ge 2(s+1)$.
Assume that spies always lie. Suppose that a
Binary Spy Hunt is performed in the room of $n$ people. If a spy is known to be present, then
a spy is found after at most $\c{T}_L(n,k)$ questions. Otherwise,
after $\n{T}_L(n,k)$ questions, either a spy is found, or it is clear that no spy is present.
\end{lemma}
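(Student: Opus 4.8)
The plan is to follow the Binary Spy Hunt branch by branch, checking in each case both that the strategy ends by correctly exhibiting a spy (or, when no spy is assumed, correctly announcing that there is none) and that the number of questions asked does not exceed the stated target. Throughout I would exploit that every question of the strategy is put to the sink of one component and asks about the sink of another, so each question decreases the number of components of the question graph by exactly one; hence along any branch the number of questions asked when the strategy halts is $n$ minus the number of components then present. Recall also that $q \ge 2$, since $n \ge 2(s+1)$ and $n = q(s+1) + r$ with $r \le s$, and that $n - q = qs + r$, so that $\c{T}_L(n,k) \ge qs$ and $\n{T}_L(n,k) \ge \c{T}_L(n,k)$ in all cases.

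First I would dispose of the branches that terminate early at an accusation. If an accusation is made in Phase~1, then completing the Binary Knight Hunt in $X \cup X'$ costs at most $(2s+1) - B(2s+1) = 2s - B(s)$ questions, using the bound of~\S\ref{sec:BKH} and $B(2s+1) = B(s)+1$ for $s \ge 1$, after which the Interrogator knows a knight; one more question, from that knight to a person $z$ who accused in Phase~1, reveals a spy, since the recorded accusation means $z$, or the person $z$ accused, is a spy. So this branch costs at most $2s+1-B(s) \le 2s \le qs \le \c{T}_L(n,k)$. A convenient fact for Phase~2 is that a Switching Knight Hunt uses exactly $B(s+1)-1$ questions regardless of the answers: in each batch of Step~1 or Step~$1'$ the level $b$ increases by the number of questions in that batch, and the procedure halts exactly when $b$ reaches $d = B(s+1)$. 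With this and Lemma~\ref{lemma:switch}, which guarantees that the person $w$ found in Phase~2 is a knight and that each subsequent component lies inside $X$ or inside $X'$, I would handle the branches in which the first accusation falls in Phase~2 or Phase~3: in both, one bounds the questions asked up to the single closing question ``ask $w$ about the accuser'', and a short computation with $n = q(s+1)+r$ shows the total is at most $qs \le \c{T}_L(n,k)$.

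The substantive case is the all-supportive branch, which reaches Phase~4. Here the key structural observation is monochromaticity: every component built by a supportive merge is monochromatic, so each of the $q-1$ components of size $s+1$ is all knights, being monochromatic and larger than the number $s$ of possible spies, while the size-$s$ component $X'$ is monochromatic, hence contains either $s$ spies or none. Consequently the only possible spies are the members of $X'$ and the singletons $x_1,\dots,x_{r+1}$, and in particular $X'$ and $\{x_1\}$ cannot both be all-spy when $r=0$. Given this, the remaining work is to run Phase~4 in the cases $r=0$, $r=1$, $r\ge 2$, with and without the assumption that a spy is present: at the start of Phase~4 the graph has $q+1$ components when $r=0$ and $q+r+1$ when $r\ge 1$, and in each sub-branch one counts the extra questions ($1$ or $2$ when $r=0$; $2$ when $y$ accuses $x_{r+1}$; at most $r+1$ otherwise) and confirms that the grand total is exactly $\c{T}_L(n,k)$ when a spy is assumed and exactly $\n{T}_L(n,k)$ otherwise, matching Theorem~\ref{thm:liars} case by case. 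Correctness in these sub-branches is immediate from the monochromaticity restriction together with the facts that asking a known knight about anyone settles that person's identity, and that asking the sink $y$ of $X'$ about a fresh singleton either forces $X' \cup \{x_{r+1}\}$ to be all knights or produces a spy-containing accusatory pair.

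The main obstacle is purely the bookkeeping: one must keep track simultaneously of which phase contains the first accusation (if any), the residue $r$, and whether a spy is assumed present, and in each of the resulting cases verify both correctness and the precise question count against the piecewise formulas of Theorem~\ref{thm:liars}. The only genuinely load-bearing structural inputs are the constancy of the Switching Knight Hunt's length and the monochromaticity argument confining the spies in the all-supportive branch; everything else reduces to arithmetic with $n = q(s+1)+r$ and $q \ge 2$.
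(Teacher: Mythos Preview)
Your proposal is correct and follows essentially the same route as the paper: bound the early-accusation branches by counting components (equivalently, by the fact that the question graph stays a forest), and then do a case analysis of Phase~4. The paper compresses the Phase~1/2/3 accusation branches into a single sentence by observing that the all-supportive branch reaches Phase~4 with $q+r+1$ components, so any accusation plus the one closing question to $w$ leaves at least $q+r$ components; you instead bound Phase~1(i) explicitly by $K(n,k)+1 \le qs$ and Phase~2/3 separately, which amounts to the same arithmetic. Your explicit monochromaticity argument for correctness in Phase~4 is something the paper leaves implicit.

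Two small inaccuracies to fix. First, it is not literally true that every question in the strategy goes from a sink to a sink: in the Switching Knight Hunt one asks $p_{b+1}$ about $p_b$, making $p_d$ a source, and later $w$ need not be a sink of its component. What you actually need and use is the weaker fact that every question joins two distinct components, so the question graph remains a forest; this holds throughout. Second, in Phase~4 the grand total is not always \emph{exactly} $\c{T}_L(n,k)$ or $\n{T}_L(n,k)$: for instance when $r\ge 3$ and $y$ accuses $x_{r+1}$, the total is $n-q-r+1 < \c{T}_L(n,k)$. Replace ``exactly'' by ``at most'' and the argument goes through.
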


\begin{proof}
At the beginning of Phase~4 the question graph has $q+r+1$ components. Hence if
an accusation is made in an earlier phase then, after the accusation, the
question graph has at least $q+r+1$ components. Therefore, after Person $w$ is used to identify the
accuser, there are at least $q+r$ components. This question identifies
either the accuser or the person accused as a spy, and so a spy is found
after at most $n-q-r$ questions. This meets all the targets in Theorem~\ref{thm:liars}.

Suppose the strategy enters Phase~4. If a spy is known to be present then
the Interrogator asks $1$ question if $r=0$, at most $2$ questions if $r=1$, and at most $r$ questions if $r \ge 2$.
The final numbers of components are at least $q$, $q$ and $q+1$, respectively.
If a spy is not known to be present then
the Interrogator asks at most $2$ questions if $r=0$, exactly $2$ questions if $r=1$, 
and at most $r+1$ questions if $r \ge 1$.
The final numbers of components are at least $q-1$, $q$ and $q$, respectively.
This meets the targets in Theorem~\ref{thm:liars}.
\end{proof}

This completes the proof of Theorem~\ref{thm:liars} in the case
$n\not= 2s+1$. For later use in the proof of Theorem~\ref{thm:liarscomb} in \S\ref{sec:liarscomb}
we record the following result on the Binary Spy Hunt.

\begin{proposition}\label{prop:BSHknight}
Suppose that spies always lie and that a Binary Spy Hunt enters Phase~2. A knight
is found at the end of Phase~2 after exactly $K(n,k)$ questions.
\end{proposition}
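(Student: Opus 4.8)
The plan is to count questions phase by phase, using the bracketed remarks in the Binary Spy Hunt strategy as a bookkeeping guide, and to combine this count with the fact (recalled in \S\ref{sec:BKH}) that a Binary Knight Hunt in a set of $m$ people in which no accusation is made uses exactly $m - B(m)$ questions. Since the hypothesis is that the strategy reaches Phase~2, no accusation has been made in Phase~1, so the two Binary Knight Hunts used $(s+1) - B(s+1)$ and $s - B(s)$ questions respectively; and since the strategy reaches Phase~3, no accusation is made during the Switching Knight Hunt of Phase~2, so that phase consists of whatever sequence of purely supportive questions merges the components of $X$ with those of $X'$.

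First I would nail down the number of questions in Phase~1. With $s+1 = 2^{a_1} + \cdots + 2^{a_d}$ we have $B(s+1) = d$, and as observed in the strategy, $s = (1 + 2 + \cdots + 2^{a_1 - 1}) + 2^{a_2} + \cdots + 2^{a_d}$, so $B(s) = a_1 + (d-1)$. Hence Phase~1 uses $\bigl((s+1) - d\bigr) + \bigl(s - (a_1 + d - 1)\bigr) = 2s + 1 - 2d - a_1 + 1 = 2s - 2d - a_1 + 2$ questions (I would double-check this small arithmetic in the write-up). Next, for Phase~2: since no accusation is ever made, each supportive question reduces the component count by one, and the Switching Knight Hunt stops precisely when the components of $X$ (sizes $2^{a_1}, \ldots, 2^{a_d}$) and the components of $X'$ (sizes $2^{a_2}, \ldots, 2^{a_d}$) of equal size have all been merged. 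Merging $C_i$ with $C_i'$ for $i = 2, \ldots, d$ takes one question each, i.e. $d - 1$ questions — and indeed the bracketed remark opening Phase~3 asserts there are then $(q-2)(s+1) + r + 1$ singletons outside $X \cup X'$, consistent with the component count. So Phase~2 costs $d - 1$ questions, and the total through the end of Phase~2 is $(2s - 2d - a_1 + 2) + (d - 1) = 2s - d - a_1 + 1$.

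Finally I would reconcile this with $K(n,k) = 2(n-k) - B(n-k) = 2s - B(s)$. Since $B(s) = a_1 + d - 1$, we get $K(n,k) = 2s - a_1 - d + 1$, which matches the running total exactly. For the claim that a knight \emph{is} found at the end of Phase~2, I would simply invoke Lemma~\ref{lemma:switch}: the hypotheses of a Switching Knight Hunt are met (this is exactly the content of the bracketed remark in Phase~2, namely that the $C_i$, $C_i'$ have the required weights because sizes equal weights in the absence of accusations, and the auxiliary components outside $X \cup X'$ are singletons of total weight at most $s - (s+1) + 1 = 0 \le c_1 - 1$ — wait, I need $c_1 = 2^{a_1} \ge 1$ so $c_1 - 1 \ge 0$, and the singletons have total weight $0$, which is fine), so the lemma guarantees that Person $p_d$ or $p_d'$ is a knight.

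The main obstacle I anticipate is purely the verification that the hypotheses of Lemma~\ref{lemma:switch} genuinely hold at the start of Phase~2 — in particular that the components outside $X \cup X'$ really are all singletons (so that $X'$ is a union of components and the ``other'' components have total weight $0 \le c_1 - 1$), and that the sink-vertex data ($p_i$, $p_i'$ in the larger parts) is correctly inherited from the Binary Knight Hunts. None of this is deep, but it requires carefully tracking which vertices played the role of sinks in Phase~1; I would lean on the already-stated bracketed remarks to keep this short. The arithmetic identity $2s - d - a_1 + 1 = K(n,k)$ is the other thing to get exactly right, but it is a one-line check once $B(s) = a_1 + d - 1$ is in hand.
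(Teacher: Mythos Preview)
There is a genuine gap. Your argument for Phase~2 rests on the claim ``since the strategy reaches Phase~3, no accusation is made during the Switching Knight Hunt of Phase~2'', but this is false: the Binary Spy Hunt \emph{always} proceeds from Phase~2 to Phase~3, and the parenthetical remark in Phase~3 (``Thus Phase~3 ends after one question if there was an accusation in Phase~2'') makes explicit that accusations in Phase~2 are possible. So your count only covers one special case of the proposition. Moreover, your description of what the Switching Knight Hunt does in that special case is itself incorrect: with no accusations it does \emph{not} merge $C_i$ with $C_i'$; it asks $p_2$ about $p_1$, then $p_3$ about $p_2$, and so on, chaining $C_1,\ldots,C_d$ together entirely inside $X$ and leaving $C_2',\ldots,C_d'$ untouched. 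That this also costs $d-1$ questions is a coincidence, not a vindication of your reasoning.

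The paper's fix is to observe that the Switching Knight Hunt \emph{always} asks exactly $d-1$ questions, accusations or not: in the quotient graph obtained by identifying $p_i$ with $p_i'$ for $i\ge 2$, the questions trace out a single directed path $p_d\to p_{d-1}\to\cdots\to p_1$ of length $d-1$, regardless of how many switches between $X$ and $X'$ occur. With that in hand your component-count arithmetic goes through unchanged. A smaller issue: your check of the ``total weight at most $c_1-1$'' hypothesis of Lemma~\ref{lemma:switch} looks at singletons \emph{outside} $X\cup X'$, but the lemma is applied with $P = X\cup X'$, so the relevant ``other'' components are those \emph{inside} $X'$ apart from $C_2',\ldots,C_d'$. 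These have sizes $1,2,\ldots,2^{a_1-1}$ (from the binary expansion of $s$), and hence total weight exactly $2^{a_1}-1 = c_1-1$, which is what is needed.
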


\begin{proof}
Let $s = n-k$ and
suppose as before that 
$s+1 = 2^{a_1} + 2^{a_2} + \cdots + 2^{a_d}$
where $d = B(s+1)$ and $a_1 < \ldots < a_d$. Let~$X$ and~$X'$ be the subsets of size $s+1$ and~$s$,
respectively, chosen in the strategy.
After the Switching Knight Hunt ends Phase 2, in the 
quotient of the question graph obtained by identifying Persons~$p_i$ and~$p_i'$ for $i \in \{2,\ldots, d\}$,
the images of the 
vertices $p_d, \ldots, p_1$ form a new directed path of length $d$. Hence exactly $d-1$ questions are asked
in Phase~2. The number of components after Phase~1 in~$X$ and~$X'$ are~$d$ and $d-1+a_1$, respectively;
thus after Phase~2, the number of components in $X \cup X'$ is $d+a_1$. Hence the number of questions
asked in Phases~1 and~2~is
\[ 2s+1 - (d+a_1) = 2s - B(s) \]
which equals $K(n,k)$, as required.
\end{proof}

\subsection{\sc Upper bound when $n=2s+1$}\label{subsec:liarsexcep}
The remaining case when $n=2s+1$  has a number of exceptional features.
When $n=3$, it is clear that a single question cannot identify a spy, while any two
distinct questions will, so 
$\n{T}_L(3,2) = \n{T}_L(3,2) = 2$, as required.
When $n=5$ and $s=2$, the Spy Master should support on his first answer.
He may then choose his remaining answers so
that the (undirected) question graph after three questions appears in Figure~1 below.
In each case a spy must be present, it is consistent that the spies
lied in every answer, and no spy can be identified without asking one more question.
Hence $\c{T}_L(5,3) = 4$ and $\n{T}_L(5,3) = 4$.

\begin{figure}[h!]
\includegraphics{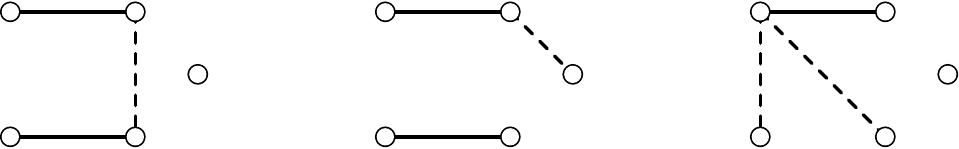}
\caption{\small\setlength{\baselineskip}{12pt} Undirected question graphs after four questions when $n=5$ and $k=3$
with
optimal play by the Spy Master.} 
\end{figure}

Now suppose that $s \ge 3$. The lower bound  proved in \S\ref{subsec:liarslower} shows
that $\c{T}_L(2s+1,s+1) \ge 2s-1$ and $\n{T}_L(2s+1,s+1) \ge 2s$.

We saw in \S 2 that a Binary Knight Hunt will find 
a knight, say Person~$w$, 
after at most $2s-B(s)$ questions. At this point the question graph is a forest.
If $s$ is not a power of two then, since $B(s) \ge 2$, the Interrogator
can  
ask Person $w$ further questions until exactly $2s-2$ questions have been asked,
choosing questions so that the
question graph remains a forest.
Suppose that after question $2s-2$ the components
in the question graph are $X$, $Y$ and $Z$,
where $X$ is the component containing Person~$w$. 
If an accusation has been made by someone in $X$, 
then a spy is known.
Moreover, if an accusation has been made by someone in $Y$ or $Z$, then 
asking Person $w$ about this person will identify a spy.
Suppose 
that no accusations have been made. 
If it is known that a spy is present then Person $w$ will support
a person in component $Y$ if and only if everyone in component $Z$ is a spy,
and so one further question suffices to find a spy.
Otherwise, two questions asked to Person $w$ about people in components $Y$ and $Z$
will find all identities. 

The 
remaining case is when $s = 2^e$ where $e \ge 2$.
It now requires $2s-B(s) = 2s-1$ questions to find a knight using
a Binary Knight Hunt. 
One further question will connect the two remaining
components in the question graph, finding all identities in $\n{T}_L(2s+1,s+1) = 2s$ questions.
Suppose now that a spy is known to be present. Then the danger is that,
as in the question graphs shown in Figure~1, 
after asking the target number of $2s-1$ questions, the Interrogator
succeeds in identifying a knight, but not a spy.
When $s=4$ this trap may be avoided using
the following lemma.

\begin{lemma} $\c{T}_L(9,5) \le 7$.
\end{lemma}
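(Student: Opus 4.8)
The plan is to produce one adaptive questioning strategy for the Interrogator and to verify that, whatever the Spy Master answers, a spy is found within seven questions; together with the bound $\c{T}_L(2s+1,s+1)\ge 2s-1$ from \S\ref{subsec:liarslower} this also gives $\c{T}_L(9,5)=7$. Two elementary observations carry most of the weight. First, since at most four spies are present, any component whose weight equals its size and which contains at least five people consists entirely of knights; and a uniform component of size four consists entirely of knights the moment a spy is located elsewhere, for otherwise there would be at least $4+1=5$ spies. Secondly, because a spy is known to be present, once eight people have been shown to be knights the ninth must be a spy. The strategy exploits both: when the Spy Master accuses, the accusatory component created contains at least one spy and a known knight can be used to resolve it; when the Spy Master only ever supports, the Interrogator arranges to have built, after at most five questions, a forest of uniform components one of which has size at least five, so that a known knight together with one or two further questions either exposes a spy or drives all but one person into the set of knights.

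In the all-supportive line the target is a forest of uniform components of sizes $5,2,1,1$ after five questions (on $\{1,\dots,5\}$, $\{6,7\}$, $\{8\}$, $\{9\}$, say). Taking a person $w$ of the size-five component — a knight by the first observation — the Interrogator asks $w$ about a person of $\{6,7\}$: an accusation identifies persons $6$ and $7$ as spies, while support shows $1,\dots,7$ to be knights, and then asking $w$ about person $8$ either exposes a spy or adds person $8$ to the knights, forcing person $9$ to be a spy. Thus at most seven questions are used.

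For the accusation branch, suppose the first accusation occurs at question $j$, producing an accusatory component $D$, which contains a spy. If a uniform component of size four has already been assembled it now consists of known knights, and a single question by one of them about a member of $D$ finds a spy well within budget. Otherwise one recovers a knight by running a Binary Knight Hunt (\S\ref{sec:BKH}) on the set $R$ formed from the people outside $D$, together, if necessary, with one person from the heavier part of $D$; a short weight count using the facts that $D$ is accusatory and that its heavy part is uniform shows that $R$ has a strict majority of knights, so the Hunt delivers a knight within the remaining budget, after which one final question by that knight about a member of $D$ locates a spy. The augmentation of $R$ by a heavy-part member of $D$ is exactly what makes the knight-majority count go through when $D$ is small and roughly balanced.

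The point requiring genuine care — and the main obstacle — is organising the opening moves so that no run of answers ever leaves the Interrogator in a \emph{balanced} position from which a spy cannot be forced in the questions that remain. The archetypal such position is a uniform component of size four together with a disjoint configuration that could itself house four spies (for instance a singleton already known to be of the type opposite to the size-four component, or a uniform component of size three of the opposite type). From such a position one checks that the Spy Master can keep alive both ``the four-person component consists of spies'' and ``a disjoint set of the same size consists of spies'', and that no three further questions separate these alternatives; hence the strategy must never reach such a position with only three questions to spare. In particular the size-five component must be built in an interleaved, partly ad hoc way rather than by first forming a size-four sub-component, and a handful of residual borderline positions — such as an accusation on the second question of a three-person build, or the position reached after four disjoint pair-questions — must be dispatched by direct arguments of the same flavour. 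Verifying that some such arrangement of the first few questions works, which amounts to a short exhaustive check of the seven-move game tree, is where the real content of the proof lies.
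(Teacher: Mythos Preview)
Your proposal is a plan rather than a proof: you identify the right shape of the argument (all-supportive endgame, knight-recovery after an early accusation, avoidance of ``balanced'' positions) but you never exhibit the questioning strategy, and in your final sentence you explicitly defer the ``real content'' to an unperformed exhaustive check. For a lemma whose whole point is to dispatch a single small case, this is precisely what has to be supplied. The paper's proof does just that: it gives a concrete table of questions $(1,2),(1,3),(4,5),(4,6),(4,7),(1,8),(1,9)$ together with the continuation after each possible deviation, so that every branch can be verified by hand. Note incidentally that the paper's main line never assembles a size-five supportive component; the key trick is to build two size-three blocks and use one as a reliable probe against the other.

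There is also a concrete gap in your accusation branch. You claim that after an accusation creating the component $D$, the set $R$ consisting of the people outside $D$ together with one person from the heavy part of $D$ always has a strict knight majority, so that a Binary Knight Hunt on $R$ succeeds. This fails already for the first case your own constraints allow. You say the build must not pass through a supportive size-four block; but whatever interleaving you use, some branch produces $D=\{1,2,3\}\mid\{4\}$ (heavy part $\{1,2,3\}$) after three questions. Take $R=\{1,5,6,7,8,9\}$. If the sole spy in $D$ is person~$4$, then $\{1,2,3\}$ are knights and up to three spies may sit in $\{5,\ldots,9\}$, giving a $3$--$3$ split in~$R$: no strict majority, so a Binary Knight Hunt on six singletons with three spies can terminate with three weight-zero pairs and no knight found. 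You would then need four further questions on top of the three already spent, exceeding the budget. Your parenthetical remark that ``residual borderline positions \ldots\ must be dispatched by direct arguments'' acknowledges that the general mechanism does not cover everything, but those direct arguments are exactly what the lemma is asking for.
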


\begin{proof}
The table in Figure~2 shows
the sequence of questions the Interrogator should ask, together with an
optimal sequence of replies from the Spy Master. The final column 
gives the continuation if the Spy Master gives the opposite
answer to the one anticipated in the main line. (The further questions
in these cases are left to the reader.)
It is routine to check that in every case
the Interrogator finds a spy after at most  seven questions.
\end{proof}
%
\newlength{\expected}
\settowidth{\expected}{\mbox{Anticipated}}
\newlength{\opposite}
\settowidth{\opposite}{\mbox{Continuation for}}
\addtolength{\opposite}{-7pt}

\begin{figure}[h!]
\newcommand{\No}{Support}
\newcommand{\Yes}{Accuse} 
\small
\centerline{\begin{tabular}{llll}
\toprule
Components of question graph & Question & 
\parbox{\expected}{\setlength{\baselineskip}{12pt}\raggedright Anticipated\ answer}
  & \parbox{\opposite}{\setlength{\baselineskip}{12pt}\raggedright Continuation for opposite answer}  \\ \midrule
$\{1\},\, \{2\},\, \{3\},\, \{4\},\, \{5\},\, \{6\},\, \{7\},\, \{8\},\, \{9\}$ & $(1,2)$ & \No${}^\star$ & $(3,4)$ BKH \\
$\{1,2\},\, \{3\},\, \{4\},\, \{5\},\, \{6\},\, \{7\},\, \{8\},\, \{9\}$        & 
$(1,3)$ 
& \No & $(4,5)$ BKH \\
$\{1,2,3\},\, \{4\},\, \{5\},\, \{6\},\, \{7\},\, \{8\},\, \{9\}$               & $(4,5)$ & \No & 
$(1,6)$ \\ %
$\{1,2,3\},\, \{4,5\},\, \{6\}, \{7\},\, \{8\},\, \{9\}$                        & 
$(4,6)$ 
& \Yes & 
$(1,4)$ \\
$\{1,2,3\},\, \{4,5\} \mid \{6\}, \{7\},\, \{8\},\, \{9\}$                      & 
$(4,7)$ 
& \No${}^\star$ & $(1,5)$ \\
$\{1,2,3\},\,  \{4,5,7\} \mid \{ 6\},\, \{8\},\, \{9\}$                         & 
$(1,8)$ 
& \Yes 
& $(1,4)$  \\
$\{1,2,3\} \mid \{8\},\,  \{4,5,7\} \mid \{ 6\},\,\{9\}$                        & 
$(1,9)$ 
& 
\multicolumn{2}{l}{\raisebox{-6.0pt}{\parbox{1.6in}{\raggedright\setlength{\baselineskip}{12pt} \No: Person $8$ is a spy \Yes: Person $6$ is a spy}}} \\ \bottomrule
\end{tabular}}
\caption{\small\setlength{\baselineskip}{12pt} In a nine person room, seven questions suffice to find a spy.
The question `Person $x$, is
Person $y$ a spy?' is shown by $(x,y)$.
Components of the question graph known to contain a spy are shown by $X \mid Y$ 
where the people in $X$ and $Y$ have opposite identities.
Answers marked $\star$ are the unique optimal replies by the
Spy Master. The abbreviation BKH indicates that the continuation
is a Binary Knight Hunt. (If the second question results in an accusation,
regard the component $\{1,2,3\}$ of weight $1$
as the singleton $\{2\}$.) 
}
\end{figure}

Now suppose that $s = 2^e$ where $e \ge 3$. 
Let 
\[ \{1,2,\ldots, 2s+1 \} = X_1 \cup X_2 \cup \cdots \cup X_8 \cup \{2s+1\} \]
where the union is disjoint and $|X_i| = 2^{e-2}$ for all $i$.
The Interrogator should start by performing a separate Binary Knight Hunt
in each $X_i$. 
Suppose that an accusation is made, say when two components both of size $2^f$ 
are connected. Let $Y$ be the set of people not in either of these components.
The questions asked so far in $Y$ form an incomplete Binary Knight Hunt in~$Y$.
Since $|Y| = 2(2^e-2^f) + 1$, a knight may be found after
\[ 2(2^e-2^f) - B(2^e-2^f) \]
further questions. Asking this knight about
a person in the accusatory component
of size $2^{f+1}$ identifies a spy. The total number of questions asked is
$2^{f+1} - 1 + 2(2^e-2^f) - B(2^e-2^f) + 1 = 2^{e+1} - B(2^e-2^f) = 2s - B(2^e-2^f)$.
Since $f \le e-3$,  this is strictly less than $2s-1$.

If no accusations are made then, after the eight Binary Knight Hunts are performed, each~$X_i$
is a connected component of the question graph
containing $2^{e-2}$
people of the same identity. There is also a
final singleton component containing Person $2s+1$.
Let Person~$p_i$ belong to $X_i$ for each~$i$, and let
$p_9 = 2s+1$. It is  routine to check that replacing $i$ with $p_i$ 
in the question strategy shown in Figure~2 will now find a spy
in at most $7$ more questions, leaving a final question graph with two components.

\section{Proof of Theorem~\ref{thm:knights}}\label{sec:knights}

Suppose that spies always lie and
that $G$ is a question graph having components in which
the Interrogator can correctly claim that Person $x$ is a spy. Let $C$ be the component containing Person $x$ and
let $X$, $Y$ be the partition of $C$ into people of different types,
chosen so that $x \in X$. If $|X| \ge |Y|$ then, given any assignment of identities to the people
in the room that makes the people in $X$ spies, we can switch knights and spies in component $C$
to get a new consistent assignment of identities. 
Hence $|X| \le |Y|$ and 
the people in~$Y$ must be knights.
Thus $E_L(n,k) = K_L(n,k)$. Since
\[ E_L(n,k) \le E_S(n,k) \le K_S(n,k) \]
is obvious and $K_S(n,k) = K_L(n,k)$ was seen in the introduction, it follows
that $E_S(n,k) = K_S(n,k) = E_L(n,k) = K_L(n,k)$.
Since $K(n,k) = 2(n-k) - B(n-k) \le n-2$, there is a person not involved 
in any question by question $K(n,k)$. Therefore the same result holds
for the corresponding quantities defined on the assumption that a spy is present.

For the next part of Theorem~\ref{thm:knights} we must recall a basic result on the
reduction of the majority game to multisets of weights. 

\begin{lemma}\label{lemma:weightClaim}
Suppose that a question graph 
has components $C_1, \ldots, C_d$. Let~$c_i$ be the weight of $C_i$ and let $c_1 + \cdots + c_d = 2s+e$
where $e = k - (n-k)$ and $s \in \N_0$. The
identities of the people in component $C_i$ are unambiguous if and only if $c_i \ge s+1$. 
\end{lemma}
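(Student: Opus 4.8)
The plan is to prove both directions of the biconditional in Lemma~\ref{lemma:weightClaim} by exhibiting, on the one hand, two consistent identity assignments that disagree on some person in $C_i$ when $c_i \le s$, and on the other hand, by showing that knights must be in strict majority within $C_i$ when $c_i \ge s+1$ so the larger part of the partition is forced. Throughout, write $C_i$ as the disjoint union of its two partition classes $Y_i$ (the larger) and $Z_i$ (the smaller), so $|Y_i| - |Z_i| = c_i$ and $|Y_i| + |Z_i| = |C_i|$; the people within $Y_i$ all share one identity and those in $Z_i$ the opposite.

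First I would handle the ``only if'' direction: suppose $c_i \le s$ for some $i$. I claim there are consistent assignments making $Y_i$ knights and making $Y_i$ spies. Start from any baseline consistent assignment (one exists since we are in a genuine game position); in each component $C_j$ independently we may choose which of $Y_j$, $Z_j$ is the knight-class, and by Lemma~\ref{lemma:weight} the answers already given remain consistent under such a swap within a single component. So it suffices to count knights. If we make $Y_j$ knights in every component, the number of knights is $\sum_j |Y_j| = \tfrac12\bigl(\sum_j |C_j| + \sum_j c_j\bigr) = \tfrac12(n + 2s + e)$. Since $e = k-(n-k) = 2k-n$, this is $\tfrac12(n + 2s + 2k - n) = k + s$. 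Now if instead we flip component $C_i$ alone, making $Z_i$ knights there, the knight-count drops by $c_i$, to $k + s - c_i \ge k$, which still satisfies the hypothesis ``at least $k$ knights.'' Hence both assignments are consistent and they disagree on the identity of every person in $C_i$, so the identities in $C_i$ are ambiguous.

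For the ``if'' direction, suppose $c_i \ge s+1$; I must show $Y_i$ is forced to be the knight-class. Suppose for contradiction that $Z_i$ could be the knights in some consistent assignment. In every \emph{other} component $C_j$ ($j \ne i$) the number of spies is at least $\lfloor |C_j|/2 \rfloor \ge \tfrac12(|C_j| - c_j)$ — indeed it is exactly $\min(|Y_j|,|Z_j|)$ or $\max$, so at least $(|C_j|-c_j)/2$. Counting spies: component $C_i$ contributes $|Y_i| = \tfrac12(|C_i| + c_i)$ spies, and the others contribute at least $\sum_{j\ne i} \tfrac12(|C_j| - c_j)$. The total is at least $\tfrac12\bigl(\sum_j |C_j| + c_i - \sum_{j \ne i} c_j\bigr) = \tfrac12\bigl(n + 2c_i - (2s + e)\bigr) = \tfrac12(n + 2c_i - 2s - 2k + n) = n - k + c_i - s \ge n-k+1$, contradicting that there are at most $n-k$ spies. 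Hence $Z_i$ cannot be the knights, so $Y_i$ is forced and the identities in $C_i$ are unambiguous.

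The only genuinely delicate point is the bookkeeping that a single-component swap preserves consistency of all previously recorded answers; this is exactly the content of Lemma~\ref{lemma:weight} (the partition of a component into opposite-identity classes is intrinsic, so swapping the labels ``knight''/``spy'' on the two classes flips every answer between the two classes and within each class to their logical complements, matching the liar/truth-teller rule), together with the observation that edges running between distinct components impose no constraint. I expect this compatibility check, rather than the arithmetic, to be where care is needed, though it is short. The rest is the two counting inequalities above, each of which reduces cleanly once one substitutes $e = 2k-n$ and $\sum_j |C_j| = n$.
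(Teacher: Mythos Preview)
Your argument is correct, and in fact supplies what the paper does not: the paper's own ``proof'' is merely a citation to \cite[Equation~(14)]{Aigner} and \cite[Section~2]{BritnellWildonMajority}, so your self-contained counting argument is a genuine addition. The two directions you give are the standard ones, and the arithmetic (substituting $e = 2k-n$ and $\sum_j |C_j| = n$) is right.

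A few small points to tidy. First, in the ``if'' direction you write that the number of spies in $C_j$ is at least $\lfloor |C_j|/2 \rfloor$; this is false in general (take $|C_j|=5$, $c_j=3$, so $|Z_j|=1 < 2$). What you need, and what you correctly use afterward, is only the weaker bound $|Z_j| = \tfrac12(|C_j|-c_j)$; drop the floor claim. Second, the swap-consistency you attribute to Lemma~\ref{lemma:weight} is really a consequence of the sentence immediately preceding that lemma (the existence and uniqueness of the bipartition of each component into opposite-identity classes); Lemma~\ref{lemma:weight} itself concerns how weights combine under a new edge, which is not quite the point here. Third, the remark that ``edges running between distinct components impose no constraint'' is vacuous, since by definition of component there are no such edges. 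None of these affect the validity of your proof.
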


\begin{proof}
See \cite[Equation (14)]{Aigner} or \cite[Section 2]{BritnellWildonMajority}.
\end{proof}

Let $t = K(n,k)$. It is clear that $N_L(n,k) \le N_S(n,k) \le K_S(n,k)+1$, so 
to show that $N_L(n,k) = N_S(n,k) = t+1$, it suffices to show that
$N_L(n,k) \ge t + 1$. 
The Spy Master can ensure that after $t-1$ questions the Interrogator is unable to identify
a knight.
Suppose one of the first $t$ questions forms a cycle in the question graph.
By the remarks on the majority game following the statement of Theorem~\ref{thm:knights},
this question is redundant from the point of view of finding a knight. The
results already proved in this section show that no identities can be found until
a knight is found. We may therefore assume that
the question graph after question $t$ is a forest.

Let the sum of the weights of the components of the question graph after question $t-1$
be $2s+e$ where $e = k - (n-k)$. By Lemma~\ref{lemma:weightClaim}
each component has weight at most $s$.
Suppose that on question $t$ the Interrogator asks a person in component~$C$ 
about a person in component $C'$. Let $c$ be the weight of $C$ and let $c'$ be 
the weight of $C'$. By the reduction to the majority game, we
may assume that $c \ge c'$. We consider two cases.

\begin{itemize}
\item[(i)]
If $1 \in C \cup C'$ then the Spy Master supports. The
weight of the component containing Person 1 is unchanged, so by Lemma~\ref{lemma:weightClaim},
the identity of Person 1 is still ambiguous.

\item[(ii)]
If $1 \not\in C\hskip1.75pt \cup\hskip1.75pt C'$ then the Spy Master accuses. 
By Lemma~\ref{lemma:weight}, the weight of
the new component containing Person 1 is $c - c'$. The sum of all component weights is now
$2(s-c') - e$, and we have $c - c' \le s-c'$. 
By Lemma~\ref{lemma:weightClaim} the identity of Person 1 is still ambiguous.
\end{itemize}

Hence $N_L(n,k) = N_S(n,k) = t$, as required.
In case (ii) a spy is clearly present. In case (i), the
question graph after question $t$ has a component $C \cup C'$ not
containing Person $1$. If spies are unconstrained then a source
vertex in this component may be a spy. 
 Hence $\c{N}_S(n,k) = t+1$. 
Moreover, unless $n = 2^{e}+1$ and $k = 2^{e-1} +1$ for some $e \in \N$ we have
$t +1 \le n-2$, and so after question $t+1$ there is person not yet 
involved in any question, implying that
 $\c{N}_L(n,k) = t+1$. 
The proof of Theorem~\ref{thm:knights}
is completed by the following lemma which deals with the exceptional case when $t = n-2$.

\begin{lemma}\label{lemma:knightsexcep}
If $n = 2^{e+1}+1$ and $k = 2^{e} +1$ then $\c{N}_L(n,k) = n-2$.
\end{lemma}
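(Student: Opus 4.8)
We must show that when $n = 2^{e+1}+1$ and $k = 2^e+1$ (so $s = n-k = 2^e$ and the relevant threshold is $t = K(n,k) = 2s - B(s) = 2^{e+1} - 1 = n-2$), the Interrogator can identify Person~$1$ in $n-2$ questions, given that a spy is present. By the general argument already given, $\c{N}_L(n,k) \ge t = n-2$, so it suffices to exhibit a strategy that finishes in $n-2$ questions; equivalently, the final question graph must have exactly two components.

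\textbf{Plan.} The natural idea is to reuse the machinery from the $n = 2s+1$ analysis in \S\ref{subsec:liarsexcep}, but steered toward Person~$1$ rather than toward an arbitrary spy. First I would set aside Person~$1$ and run a Binary Knight Hunt on the remaining $2s = 2^{e+1}$ people, treating them as split into suitable blocks of sizes powers of two; since $2s$ is a power of two, a successful (accusation-free) Binary Knight Hunt on these $2s$ people uses $2s - 1 = n-2$ questions and produces a single component of size $2s$ together with the isolated Person~$1$ — exactly two components, and within that big component the sink is a knight, so in fact \emph{all} identities are known and in particular Person~$1$'s is. So the only danger is an \emph{accusation}. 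If an accusation occurs while connecting two components of equal size $2^f$ (with $f \le e-1$), let $Y$ be the set of the $2(2^e - 2^f)+1$ people not in those two components (this count is odd, so knights are in a strict majority in $Y$, and it includes Person~$1$); the questions asked so far in $Y$ form an incomplete Binary Knight Hunt there, so a knight can be found in a further $2(2^e-2^f) - B(2^e - 2^f)$ questions. Then ask that knight about a source vertex of the accusatory component: since the accusatory component has a spy at (at least one) source, but more to the point we want Person~$1$, I would instead route the knight to ask about Person~$1$ directly once Person~$1$ has been absorbed into the $Y$-hunt, and tally the questions.

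\textbf{Question count for the accusation branch.} The accusatory component has size $2^{f+1}$ and absorbs one further question when we join the knight to it (or to Person~$1$); so the total is at most
\[
(2^{f+1} - 1) + \bigl(2(2^e - 2^f) - B(2^e - 2^f)\bigr) + 1 = 2^{e+1} - B(2^e - 2^f).
\]
When $f \le e-1$ we have $2^e - 2^f \ge 2^{f}\ge 1$, hence $B(2^e-2^f) \ge 1$, giving a total of at most $2^{e+1} - 1 = n-2$, as needed. The tighter point is that we must make sure the final graph has at most two components: the accusatory branch spends $2^{f+1}-1$ questions inside a size-$2^{f+1}$ block (leaving it as $1$ component among the original blocks), then the $Y$-hunt merges the remaining blocks down appropriately, and the final linking question attaches everything; one checks that at most two components survive. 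I would organize the blocks so that $Y$ consists of full powers-of-two blocks plus Person~$1$, mirroring the block decomposition $\{1,\dots,2s+1\} = X_1 \cup \cdots \cup X_8 \cup \{2s+1\}$ used earlier, and verify the component bookkeeping exactly as in \S\ref{subsec:liarsexcep}.

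\textbf{Main obstacle.} The real work is not the arithmetic but confirming that when an accusation happens \emph{early} (small $f$), the subsequent Binary Knight Hunt on $Y$ together with the single linking question genuinely leaves only two components — i.e. that no ``wasted'' question (a redundant cycle-forming question) is ever forced, and that the knight produced by the $Y$-hunt can be connected to the accusatory component (or to Person~$1$) by a single additional edge that reduces the component count to two. This is the same delicate component-counting that underlies Lemma~\ref{lemma:BSHworks}, and I expect it to require carefully treating the boundary cases $f = 0$ (the first question is an accusation) and $f = e-1$ separately, checking in each that the total never exceeds $n-2$ and the graph closes up to two components. Once that bookkeeping is pinned down, combined with the accusation-free case above (which is immediate since $2s$ is a power of two), the lemma follows.
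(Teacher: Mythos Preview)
Your approach is essentially the paper's: run a Binary Knight Hunt on $\{2,\ldots,n\}$, handle the accusation-free case directly, and after an accusation continue a Binary Knight Hunt on the complement~$Y$ (which has odd size $2(2^e-2^f)+1$ and a strict knight majority), then spend one last question to settle Person~$1$. Your arithmetic
\[
(2^{f+1}-1) + \bigl(2(2^e-2^f) - B(2^e-2^f)\bigr) + 1 \;=\; 2^{e+1} - B(2^e-2^f) \;\le\; n-2
\]
matches the paper's count (the paper writes it as finding a knight by question $n-3$ and then asking about Person~$1$).

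Two points. First, you restrict to $f\le e-1$ and never treat the case $f=e$, which does occur: if the first $n-3$ answers are supports, the $(n-2)$nd question joins two components of size $2^e$ and may be an accusation. This case is trivial---the accusatory component then has weight~$0$ and size $2^{e+1}$, so it already contains $2^e$ spies, forcing Person~$1$ to be a knight---but it should be stated.

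Second, your ``main obstacle'' is not an obstacle. A Binary Knight Hunt never creates a cycle, and the final question (knight asks Person~$1$) is asked only when Person~$1$ lies in a different component from the knight, so the question graph stays a forest throughout. Hence the number of questions equals $n$ minus the number of components, and your inequality on the question count \emph{is} the component bookkeeping; no separate verification of ``two components survive'' or special treatment of $f=0$ or $f=e-1$ is needed. The detours about source vertices of the accusatory component and the eight-block decomposition from \S\ref{subsec:liarsexcep} are irrelevant here and can be dropped.
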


\begin{proof}
The Interrogator performs a Binary Knight Hunt using Persons~$2$ up to $n$.
If there is no accusation on or before question $n-2$ then Person $1$ is a spy. 
Suppose that the first accusation occurs when two components of size $2^f$ are connected.
If $f = e$ then Person $1$ is identified as a knight after $n-2$ questions. Otherwise, ignoring
the new component of weight $0$, the new multiset of component
weights is consistent with a Binary Knight Hunt in a room of $2^{e+1} - 2^{f+1} +1$ people,
known to contain at least $2^e - 2^f + 1$ knights.
A knight may therefore be found after at most 
\[ (2^{f+1} - 1) + 2(2^{e} - 2^{f}) - B(2^e-2^f) = 2^{e+1} - B(2^e-2^f) - 1 \le n- 3 \]
questions, and Person $1$'s identity found by question $n-2$.
\end{proof}

\section{Proof of Theorem~\ref{thm:liarscomb}}\label{sec:liarscomb}

Let $s = n-k$. We must deal
with the cases $n \ge 2(s+1)$ and $n = 2s+1$ separately.

\begin{proof}[Proof when $n \ge 2(s+1)$]
Let $2^a$ be the largest power of two such that $2^a \le s+1$.
Perform a Binary Spy Hunt, as described in \S\ref{subsec:liars},
choosing the sets $X$ and $X'$ of sizes $s+1$ and $s$, respectively,
so that $1 \in X$. Whenever permitted in the Binary
Knight Hunt in Phase 1, ask questions to Person $1$, or failing that, within $X$.
Suppose there is an accusation in Phase~1; then
the Binary Spy Hunt is completed in $X \cup X'$ and 
a knight, say Person~$w$, is found after at most $K(n,k)$ questions. 
If the first accusation is in $X$ then an easy inductive argument shows
that after question $K(n,k)$ either Person $1$ is in an accusatory component, or in
the same component as Person~$w$.
If the first accusation is in~$X'$ then, before this
accusation, Person $1$ is in a non-accusatory
component in~$X$ of size $2^a$; since at most one other component
of this size can be formed in~$X'$, the same conclusion holds.
If Person $1$ is in
an accusatory component after question $K(n,k)$ then 
asking Person~$w$ about Person $1$ 
in question $K(n,k)+1$
both determines the identity of Person $1$ and finds a spy; 
in the other case case Person $1$'s identity is known,
and question $K(n,k)+1$ may be used to find a spy.
Let $n = q(s+1) + r$ and note that $K(n,k) + 1 = 2s-B(s) + 1$. If $r \le 1$ then 
\[ \c{T}_L(n,k) = n-q = qs+r \ge 2s-B(s) + 1 \]
with equality if and only if $q=2$, $r=0$ and $B(s) = 1$. If $r \ge 2$ then
\[ \c{T}_L(n,k) = n-q-1 = qs+r-1 > 2s-B(s) + 1.\]
Thus the targets for finding a spy are met.

Now suppose the Binary Spy Hunt enters
Phase~2. By Proposition~\ref{prop:BSHknight}
a knight, say Person $w$, is found at the end of Phase~2 after $K(n,k)$ questions.
If after Phase~2, Persons~$1$
and~$w$ are in the same component of the question graph, then the identity of Person 1 is known, and 
the strategy continues as usual, either finding a spy or proving that no spy is present.
If they are in different components then at least one switch from $X$ to $X'$ occurred
in the Switching Knight Hunt 
in Phase 2, and so there is an accusatory edge
in the component of Person~1, and the earlier argument applies.
\end{proof}

\begin{proof}[Proof when $n = 2s+1$] 
The case $s=1$ is easily dealt with. If $s > 1$ then $K(2s+1,s+1) = 2s-B(s)$,
$\c{T}_L(2s+1,s+1) = 2s-1$ and $\n{T}_L(2s+1,s+1) = 2s$.
If $B(s) \ge 2$ then perform a Binary Knight Hunt, always asking questions to Person $1$ whenever
permitted. This finds a knight, say Person $w$, in at most
$2s-2$ questions.
Again either Person~$1$ is in a component with an accusatory edge,
or in the same component as Person $w$. The former case is as earlier. 
In the latter case asking further
questions to Person~$w$, while keeping the question graph a forest, meets 
the targets for finding a spy.

Suppose that $s$ is a power of two. If $s \ge 4$ 
then $K(2s+1,s+1) = 2s-1$. As shown
in \S\ref{subsec:liarsexcep}, it is possible to find a knight by question $2s-1$
and all identities by question $2s$. Suppose that a spy is known to be present.
Then the strategy in \S\ref{subsec:liarsexcep} finds both a knight and a spy 
by question $2s-1$. This leaves one further question to determine
the identity of Person 1. 
The case $s=1$, is easily dealt with,
as is the case $s=2$ when $\c{T}_L(5,3) = 4$.
\end{proof}

In all cases, after the
question when a spy is identified, or, in the case of $\n{T}_L(n,k)$, when it is clear that
no spies are present, the question graph is a forest.
Hence all identities may be obtained in $n-1$ questions, as claimed in the introduction.

\enlargethispage{30pt}
\begin{example}\label{ex:ex}
Figure~3 below shows an example of the Binary Spy Hunt used to prove Theorem~\ref{thm:liarscomb}
in which all four phases of the strategy are required.
\end{example}

\begin{figure}[h]
\begin{center}
\vspace*{-3pt}
\scalebox{1}{\includegraphics{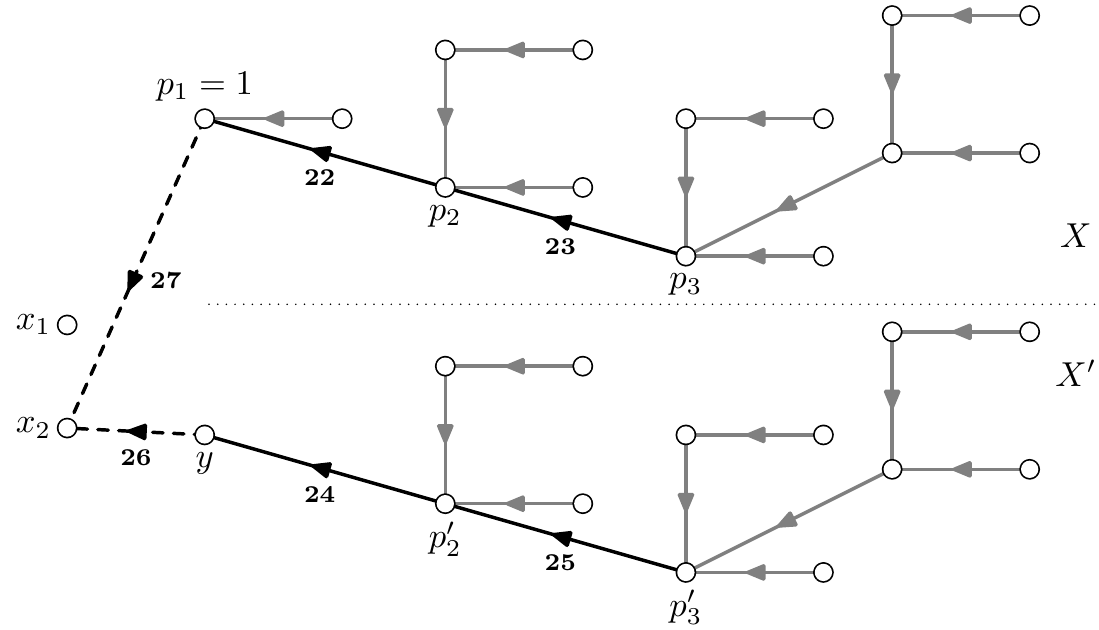}}
\end{center}

\vspace*{-9pt}
\caption[]{Example of the Binary Spy Hunt used to prove Theorem~\ref{thm:liarscomb}. 
We take $s = 13$ and $n = 29$. 
Notation
is as in the description of this strategy in \S\ref{sec:liars}. 
The set $X$ consists of the $14$ vertices above the dotted line,
and $X'$ consists of the $13$ vertices other than $x_1$ and $x_2$ below the dotted line.
Phase~1 lasts $21$ questions (indicated by grey arrows)
and leaves components in $X$ of sizes $2$, $4$ and $8$ and
components in $X'$ of sizes $1$, $4$ and $8$. Later questions are numbered. 
Phase~2 ends after question $K(29,13) = 23$ with Person $1$ identified as a knight.
Phase~3 ends after question $25$. Person $x_2$
is identified as a spy at the end of Phase~4
after $\c{T}_L(29,13) = 27$ questions. One further question will
find the identity of Person $x_1$, determining all identities in $n-1 = 28$ questions.

\medskip
To give a more interesting example of the Switching Knight Hunt (used in Phase~2)
we remark that
if $p_2$ had accused $p_1$ on question $22$ then the strategy would have switched
to $X'$. Suppose that $p_3'$ then supports~$p_2'$. Then $p_2'$ is identified as a knight
after $23$ questions, and asking $p_2'$ about $1$ both finds a spy and determines
the identity of Person~$1$.
}

\end{figure}

\section{Proof of Theorem~\ref{thm:spies}}\label{sec:spies}

We now turn to the first of the two main theorems dealing with unconstrained spies.
Suppose it is known  that a spy is present. After $n-2$ questions have been
asked, there are two people in the room who have never
been asked about. If no accusations have been made then
 it is consistent that exactly
one of these people is a spy. Hence $\c{T}_S(n,k) \ge n-1$. A similar
argument shows that $\n{T}_S(n,k) \ge n$.


To establish the upper bounds in Theorem~\ref{thm:spies} we use
a modified version of the questioning strategy used in \cite{Blecher}
and \cite{WildonKS} 
to find everyone's identity. 
We show in \S\ref{sec:spiescomb} below that a suitable modification
of the Binary Knight Hunt can also be used, provided $k-1$ is not a power of two.
It is worth noting that the unmodified Binary Knight Hunt is ineffective, since 
it takes $2^{a-1}$ questions to rule out the presence
of spies in a component of size $2^a$ whose sink vertex is a knight.

\begin{strategy}[\bf Extended Spider Interrogation Strategy]{\ }

\smallskip
\noindent\emph{Phase 1.} 
Ask Person $1$ about Person $2$, then Person
$2$ about Person $3$, until either there is an accusation, or Person $n-1$ supports Person~$n$.
If there is an accusation, say when Person $p$ accuses Person $p+1$, go to Phase~2,
treating Person $p$ as a candidate who has been supported by $p-1$ people and 
accused by one person.
Otherwise terminate.

\medskip
\noindent\emph{Phase 2.} Set $\ell = n-k$.
Continue to ask further people about the chosen candidate
until either
\begin{itemize}
\item[(a)] strictly more people have accused the candidate
than have supported him, \emph{or} 
\item[(b)] at least $\ell$ people have supported
the candidate.
\end{itemize} 
If Phase~2 ends in~(a) then replace $\ell$ with $\ell-m$, where $m$ is the number
of people accusing the candidate, and repeat
Phase~2, choosing as a new candidate someone who has not yet been involved in proceedings.
If Phase~2 ends in (b) then the strategy terminates. 
(Thus Phase~2 ends immediately if and only if $p > \ell$.)
\end{strategy}

If the strategy terminates in Phase~1 then $n-1$ questions have been asked.
If a spy is known to be present then Person $1$ is a spy; otherwise asking Person $n$
about Person $1$ will decide whether any spies are present, using $n$ questions in total.

Whenever a candidate is discarded in Phase~2, the connected component of
the question graph containing him contains at least as many spies as knights.
This shows that it is always possible to pick a new candidate when required by Phase~2,
and that the candidate when the strategy terminates is a knight. Let this knight be Person $w$.
If Person $w$ has been accused by anyone then a spy is known. Otherwise asking Person $w$
about Person~$p$ from Phase~1 finds a spy in one more question. In either case the question
graph remains a forest, and so at most $n-1$ questions are asked.

This shows that $\n{T}_S(n,k) \le n$ and $\c{T}_S(n,k) \le n-1$, completing the proof
of Theorem~\ref{thm:spies}.

\section{Proof of Theorem~\ref{thm:spiescomb}}\label{sec:spiescomb}

We first deal with the case $n=7$ and $k=4$ since this shows the  obstacle addressed
by the questioning strategy used in the main part of the proof. For a conditional
generalization of the following lemma, see Corollary~\ref{cor:cor}.

\begin{lemma}\label{lemma:spiescomb74}
Let $n= 7$ and $k=4$. Suppose that spies are unconstrained and that a spy is known to be present.
There is no questioning strategy that will both find a knight
by question $K(7,4) = 4$ and find a spy by question $\c{T}(7,4) = 6$.
\end{lemma}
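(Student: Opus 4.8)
The plan is to argue by contradiction: suppose the Interrogator has a strategy that finds a knight by question $4$ and, when a spy is known to be present, finds a spy by question $6$. Since $K(7,4) = 2\cdot 3 - B(3) = 6-2 = 4$, the first claim says that after exactly four questions the Interrogator can always name a knight. The key structural fact is the one recorded after Lemma~\ref{lemma:weightClaim} (via the majority-game reduction): to be sure of a knight after four questions the question graph must, in the worst case, have few enough components and large enough weights, and in fact four questions on seven people forces the graph to consist of (at least) three components. I would first pin down, using the weight analysis of \S\ref{sec:knights} and Lemma~\ref{lemma:weightClaim} with $s = n-k = 3$ and $e = k-(n-k) = 1$, exactly which question graphs on seven vertices after four questions allow a knight to be identified; this should leave essentially one shape (up to relabelling), namely a component of weight at least $s+1 = 4$ — so a path or tree on at least four vertices that is entirely supportive — together with the remaining vertices in small components whose total weight is at most $3$.

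The heart of the argument is then a Spy Master strategy that keeps the first four answers supportive in such a way that (i) the Interrogator is indeed able to name a knight after question $4$, so he cannot deviate, but (ii) the resulting position is so ``spread out'' that finding a spy afterwards needs more than two further questions. Concretely, after four supportive answers the question graph is a forest; the component containing the identified knight has some size $a \ge 4$, and the remaining $7-a \le 3$ vertices sit in components of total weight $\le 3$, none of which has been touched by any accusation. Now I would have the Spy Master commit to: the big component is all knights, and exactly one spy (in fact all three spies can be hidden, but we only need one) lurks among the untouched vertices. The point is that with unconstrained spies, a vertex that is only a \emph{source} of supportive edges — in particular any isolated vertex, or the source of a short supportive path outside the big component — could be a spy or a knight, and distinguishing requires a question \emph{into} that vertex or a chain of questions. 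Counting components: after question $4$ there are at least $7-4 = 3$ components; to locate a spy with certainty the Interrogator must, in the worst case over the Spy Master's remaining answers, reduce ambiguity in a way that provably costs at least three more questions, i.e.\ he cannot finish by question $6$.

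The main obstacle — and the step I would spend the most care on — is showing the lower bound of three further questions rigorously, i.e.\ ruling out every two-question continuation from the forced four-question position. Here I would do a short case analysis on the shape of the small components (the possibilities for how $7-a$ vertices of total weight $\le 3$ are distributed: e.g.\ three singletons, or a singleton plus a supportive $2$-path, etc.), and for each, exhibit a Spy Master response to the next two questions of the Interrogator that leaves two distinct consistent identity assignments differing on some vertex. The adversary's freedom with unconstrained spies is exactly what makes this work: whenever the Interrogator asks Person $x$ about Person $y$, if $x$ is a known knight the answer is forced and the Spy Master gains nothing, but he has already arranged that the knight $w$ must be asked about several still-ambiguous vertices, and he can always answer the \emph{other} (non-forced) questions so as to keep at least one vertex genuinely undetermined. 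I would organize this as: (1) identify the forced position after four questions; (2) observe the knight component is useless for the Spy Master, so the Interrogator effectively must resolve $\le 3$ ambiguous vertices in $\le 2$ questions; (3) a pigeonhole/case check showing two questions cannot do this against adversarial spies, giving the contradiction. The likely fiddly point is (1): being precise about which four-question graphs genuinely permit a certain knight, since one must use the reduction to weights carefully and handle the case where an early answer was \emph{not} supportive (the Spy Master should in fact never be forced off the all-supportive line, which is what keeps the analysis clean).
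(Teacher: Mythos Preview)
Your proposed Spy Master strategy—answering the first four questions supportively—has a genuine gap: it loses to the simple Interrogator strategy of building a directed supportive path. If the Interrogator asks Person~$1$ about Person~$2$, then Person~$2$ about Person~$3$, then Person~$3$ about Person~$4$, and all three answers are supportive, then Person~$4$ is already a knight after question~$3$ (since Person~$4$ being a spy would force Persons~$1,2,3$ to be spies as well, giving four spies). The Interrogator now has three further questions. Asking Person~$4$ about Person~$1$ either reveals a spy or certifies that Persons~$1,2,3,4$ are all knights; in the latter case, asking Person~$4$ about Persons~$5$ and~$6$ either reveals a spy or shows that Person~$7$ must be the spy. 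Thus a knight is found by question~$3$ and a spy by question~$6$, and your Spy Master has lost. Your parenthetical claim that ``the Spy Master should in fact never be forced off the all-supportive line'' is precisely where the argument breaks.

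The paper's proof works quite differently, and the difference is essential. The Spy Master does \emph{not} always support; he first plays an optimal strategy for the majority game (spies always lie), which in particular means accusing whenever the Interrogator tries to grow a size-$2$ component into a size-$3$ component. This defeats the path strategy above (after the accusation on question~$2$ the weight multiset is $\{1^5\}$ and three more questions are needed just to find a knight) and forces any Interrogator who still wants a knight by question~$4$ through the position $\{2^2,1^3\}$ and into a question connecting two components of size~$2$. At that moment the Spy Master exploits a feature invisible to the weight analysis: the \emph{direction} of the new edge. If it points into a source vertex, the Spy Master accuses, and with unconstrained spies the resulting four-vertex component is too ambiguous for a knight to be certified by question~$4$. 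If it points into a sink vertex, the Spy Master supports; a knight may then be revealed, but the source vertices remain ambiguous and cannot all be resolved by question~$6$. The source/sink dichotomy, together with the willingness to accuse earlier, is the idea your outline is missing.
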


\begin{proof}
It is easily shown that, even if spies always lie, the Interrogator has only two questioning sequences that find a knight
by question $4$ against best play by the Spy Master.
Representing positions by multisets of weights with multiplicities indicated by exponents,
they are
\begin{align*}
\{1^7\} \rightarrow \{2,1^5\} \rightarrow \{2^2,1^3\} &\rightarrow \{1^3,0\} \rightarrow \{2,1,0\} \\
\ldots & \rightarrow \{2^3,1\} \rightarrow \{4,2,1\}.
\end{align*}
In either case the Interrogator must ask a question that connects two components of size $2$. 
If the Interrogator's question creates an edge into a source vertex, the Spy Master should accuse.
The Interrogator is then unable to find a knight by question $4$. If the new edge is into a sink
vertex, the Spy Master should support. This may reveal
a Knight by question $3$, but the Interrogator is then unable to find a spy by question $6$.
\end{proof}

A similar argument shows that 
there is no questioning strategy that will both
find a knight by question $K(7,4) = 4$ and either find a spy or prove that no spies
are present by question $\n{T}(7,4) = 7$.

For the main part of
 Theorem~\ref{thm:spiescomb} we need the following strategy which finds a knight after $K(n,k)+1$ questions.
The comment in square brackets shows that the strategy is well-defined. 

\begin{strategy}[\bf Modified Binary Knight Hunt]
Let $P$ be a subset of $2s+1$ people in which at most $s$ spies are present
and let $X$ be a subset of $P$ of size $2^{a+1}$, where 
 $a \in \N_0$ is greatest such that $2^a \le s$.

\medskip
\noindent\emph{Phase 1.} [Each component in $X$ is a directed path. 
There are distinct components in $X$ of equal size
unless $X$ is connected.] If $X$ is connected then terminate.
Otherwise choose two components $C$ and $C'$ in $X$ of equal size and
ask the sink vertex in $C$ about the source vertex in $C'$. If there
is an accusation go to Phase~2, otherwise continue in Phase~1.

\medskip
\noindent\emph{Phase 2.} 
Disregard the accusation ending Phase~1
and complete a Binary Knight in $P$, now connecting sink vertices as usual.
\end{strategy}

\begin{proof}[Proof of Theorem~\ref{thm:spiescomb}]
Let $s = n -k$.
Choose
a subset $P$ of $2s+1$ people such that $1 \in P$ and choose the subset $X$ of $P$ so 
that $1 \not\in X$. Perform a Modified Binary Knight Hunt in $P$.

Suppose this ends in Phase~1 after $2^{a+1}-1$ questions. Since
$2^{a+1} > s$ the sink vertex in $X$ is a knight.
Since $s \ge 2^a + B(s) - 1$ we have $2^{a+1}-1 \le 2s+1-2B(s) \le 2s - B(s) = K(n,k)$.
Let Person $w$ be the knight just found.
Ask Person $w$ about Person $1$, and then about each of the other people in singleton
components of the question graph. If there are no accusations after $n-1$
questions then either no spies are present, or the unique source vertex
in the component of Person $w$ is a spy: if necessary this can be decided in one more question.

Suppose the Modified Binary Knight Hunt ends in Phase~2 after $K(n,k)+1$ questions. Ask
Person $w$ about Person $1$, and then about the first person to make an accusation.
This identifies a spy in at most $K(n,k)+2 = 2s-B(s)+2$
questions.
Unless $n = 2s+1$ and $s$ is a power of two, this meets the target $\c{T}_S(n,k) = n-1$,
and in any case meets the target $\n{T}_S(n,k) = n$. 

In the exceptional case when $s$ is a power of two
and $n = 2s+1$, the target for finding a knight
is $K(2s+1,s+1)+1 = 2s-B(s)+1 = 2s$. To motivate Problem~\ref{problem:spiesboth}
we prove a slightly stronger result in this case, 
using the Extended Spider Interrogation
Strategy from \S\ref{sec:spies}. 
If the strategy is in Phase~1 after $2s-1$ questions then a knight is known.
Suppose the strategy enters Phase~2.
The first candidate can be rejected no later than question $2s-1$ (after being
supported by $s-1$ people and accused by $s$ people), so if the first candidate
is accepted then a knight is known by question $2s-1$. If the first candidate
is rejected then a knight is found when the question graph has at least two components,
so by question $2s-1$ at the latest. 
 In all cases a spy is found by question $\c{T}_S(2s+1,s+1) = 2s$.
\end{proof}

\section{Further results and open problems}

\subsection{Finding all identities}\label{subsec:all}
A related searching problem asks for the identity of 
every person in the room. 
Blecher proved in~\cite{Blecher} that, for unconstrained spies, $n+(n-k)-1$
questions are necessary and sufficient to find everyone's identity.  
This result was 
proved independently by the author in \cite{WildonKS} using similar arguments.
It follows from~\hbox{\cite[\S 3.3]{WildonKS}} that~$n+(n-k)-1$ 
questions may be required even if 
spies lie in every answer (but cannot be assumed to do so),
and the first question is answered with an accusation, thereby 
guaranteeing that at least one spy is present. 

Let $A(n,k)$ be the number of questions necessary
and sufficient to determine all identities when spies always lie.
Let $n = q(n-k+1) + r$ where $0 \le r \le n-k$, as in Theorem~\ref{thm:spies}. Aigner proved
in \cite[Theorem~4]{Aigner} that
\[ A(n,k) = \begin{cases} n-q+1 & \text{if $0 \le r \le 1$} \\
n-q+\epsilon_{(n,k)} & \text{if $2 \le r < n-k$} \\ 
n-q & \text{if $r = n-k$.} \end{cases} \]
where $\epsilon_{(n,k)} \in \{0,1\}$.
It is notable that if $r=0$ or $r=n-k$ then $A(n,k) = \n{T}_L(n,k)$,
and so it is no harder to find a spy or to prove that everyone in the room is a knight
than it is to find all identities in these cases. When $r=1$ we have $A(n,k) = \n{T}_L(n,q) + 1$.
Inspection of the proof of Theorem~4 in \cite{Aigner}
shows that Aigner's result  holds unchanged if it is known that a spy is present.

Aigner makes the plausible suggestion that $A(n,k) = n-q+1$ whenever $r < n-k$.
However this is not the case.
In fact, if $n \le 30$ and $r < n-k$ then
$A(n,k) = n-q$ if and only if
\[ (n,k) \in \left\{ \begin{matrix}
(13,9), (16,11),(18,14),  (19,13), (21,14), (22,15), (22,17), \\
(23,19), (24,16), (25,17),  (25,19), (26,17), (26,20), (27,18),   \\ 
(28,19), (28,23), (28,24), (29,19), (29,22), (30,20),  (30,23)\end{matrix} \right\}. \]
This can be checked by an exhaustive search of the game tree, using the 
program \texttt{AllMajorityGame.hs} available from the
author's website\footnote{See \url{www.ma.rhul.ac.uk/~uvah099/}}.
The following problem therefore appears to be unexpectedly deep.

\begin{problem}
Determine $A(n,k)$ when $n = q(n-k+1) + r$ and $2 \le r < n-k$.
\end{problem}

\subsection{The majority game}
The values of $K(n,k)$ were found for all $n$ and~$k$ in \cite{BritnellWildonMajority},
but many natural questions about the majority game remain open. 
Given a 
multiset $M$ of component weights and $e \in \N$ such that the sum
of the weights in $M$ has the same parity as $e$, let $n-V_e(M)$ be the
minimum number of questions that are necessary and sufficient to find a knight
starting from the position $M$,
when spies always lie and
 the excess of knights over spies is at least~$e$.
Thus $V_e(M)$ is the number of components in the final position, assuming optimal play.

\begin{problem}\label{problem:majority_values}
Give an algorithm that computes $V_e(M)$ that is qualitatively faster than searching the game tree.
\end{problem}

For multisets $M$ all of whose elements are powers of two, the Binary Knight
Hunt gives a lower bound on $V_e(M)$. The Switching Knight Hunt gives a lower
bound in some of the remaining cases.
In \cite{BritnellWildonMajority} a family of statistics $SW_e(M)$ were defined, generalizing
the statistic $\Phi(M) = SW_1(M)$ used in \cite{SaksWerman}. 
In \cite[Section 5]{BritnellWildonMajority} it was shown that $V_e(M) \le SW_e(M)$. However,
Lemma~7 in \cite{BritnellWildonMajority} shows that the difference may be arbitrarily large.
It therefore seems that fundamentally new ideas will be needed for Problem~\ref{problem:majority_values}.

One natural special case occurs when $e=1$.

\begin{conjecture}\label{conj:val21}
Let $k$, $a \in \N$ be such that $a < k$. Then $V_1 ( \{2^a,1^{2k-2a-1} \}) = B(k-1) + 1$.
\end{conjecture}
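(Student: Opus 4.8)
\textbf{Proof proposal for Conjecture~\ref{conj:val21}.}

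The plan is to establish matching upper and lower bounds for $V_1(\{2^a,1^{2k-2a-1}\})$, arguing on the multiset of weights with excess $e=1$. For the \emph{upper bound} $V_1(\{2^a,1^{2k-2a-1}\}) \le B(k-1)+1$, I would use a Switching Knight Hunt. The total weight is $2^a + (2k-2a-1) = 2(k-a-1)+1$, so by Lemma~\ref{lemma:weightClaim} the threshold is $s = k-a-1$, and a knight is found precisely when some component reaches weight $\ge s+1 = k-a$. First combine pairs of the $2k-2a-1$ singletons using supportive answers (or, in the adversarial game, follow the Binary Knight Hunt bookkeeping) to manufacture, alongside the given weight-$2^a$ component, a collection of components whose weights are powers of two; writing $k = 2^a \cdot(\text{something}) + \cdots$ one checks that the reachable power-of-two weights, after accounting for accusatory components having at least as many spies as knights, allow a Switching Knight Hunt whose distinguished weights $c_1 < c_2 < \cdots < c_d$ with $c_{j+1} > c_1 + \cdots + c_j$ can be taken with $d = B(k-1)+1$ parts. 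By Lemma~\ref{lemma:switch} this finds a knight, and the number of components remaining is then exactly $B(k-1)+1$, giving the upper bound on $V_1$.

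For the \emph{lower bound} $V_1(\{2^a,1^{2k-2a-1}\}) \ge B(k-1)+1$, I would have the Spy Master play to keep every component small (weight $\le s = k-a-1$) for as long as possible, using Lemma~\ref{lemma:weight}: answer a question joining components of weights $c \ge c'$ with a support if $c + c' \le s$, and otherwise with an accusation, which produces weight $c - c' \le c \le s$. The point is a potential-function argument: one shows that starting from $\{2^a, 1^{2k-2a-1}\}$, under this adversary strategy, the Interrogator can never drive the number of components below $B(k-1)+1$ while all components are small, and a knight is identified only once some component is \emph{not} small. The statistic to track is an $SW_1$-type quantity (the $\Phi$ of \cite{SaksWerman}, generalized in \cite{BritnellWildonMajority}); here the special structure --- one weight of $2^a$, the rest equal to $1$ --- should make the bound $B(k-1)+1$ come out exactly rather than with the slack that Lemma~7 of \cite{BritnellWildonMajority} warns about in general. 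Concretely I would verify that merging a weight-$1$ component into anything, or merging two equal small components, cannot decrease the relevant binary-digit count by more than the single component lost, so after $2k-2a-1 - (B(k-1)+1) + a = \text{(appropriate count)}$ supportive merges the Interrogator is forced into a position with $B(k-1)+1$ small components and no knight yet found.

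I expect the \textbf{main obstacle} to be the lower bound, specifically identifying the right potential function. The crude $SW_1$ bound is known to be lossy, so the argument must exploit that only one non-unit weight ($2^a$) is present initially; I anticipate needing a tailored invariant --- perhaps $B$ of (half the total weight minus one) plus one, tracked through merges, with a careful case analysis of whether the merged weights straddle the power-of-two boundary at $2^a$. The upper bound should be comparatively routine once the Switching Knight Hunt is set up with the correct $c_i$'s; the bookkeeping there mirrors Proposition~\ref{prop:BSHknight}. A sanity check: when $a=0$ this reads $V_1(\{1^{2k-1}\}) = B(k-1)+1$, i.e. $K(2k-1,k) = 2(k-1)-(k-1-B(k-1)) \cdots$ --- one should confirm this is consistent with $K_L(2k-1,k) = 2(k-1)-B(k-1)$ from the introduction, since $V_1$ counts final components, $n - V_1 = $ questions, giving $2k-1-(B(k-1)+1) = 2(k-1)-B(k-1)$, which matches, so the normalization is right.
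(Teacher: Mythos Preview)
This statement is a \emph{Conjecture} in the paper and is not proved there. The paper records only the inequality $V_1(\{2^a,1^{2k-2a-1}\}) \ge B(k-1)+1$ (by observing that this position arises after $a$ supportive answers in an optimal Binary Knight Hunt on $\{1^{2k-1}\}$, which is known to be optimal for the Interrogator), together with a computer check for $k \le 20$. There is no proof to compare your proposal against, and the missing direction is precisely $V_1(\{2^a,1^{2k-2a-1}\}) \le B(k-1)+1$.

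You have the two inequalities attached to the wrong arguments. By definition $V_e(M)$ is the number of components in the final position under optimal play, so an \emph{Interrogator} strategy (such as a Binary or Switching Knight Hunt) that always terminates with at least $c$ components certifies $V_e(M) \ge c$, while a \emph{Spy Master} strategy that prevents any knight from being identified until at most $c$ components remain certifies $V_e(M) \le c$. Thus your ``upper bound'' paragraph is really an argument for the known lower bound --- and for that the paper's one-line Binary Knight Hunt observation already suffices; the Switching Knight Hunt apparatus is unnecessary, and your sketch of it tacitly assumes a particular pattern of answers when building the power-of-two components, which the adversary need not provide.

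Your ``lower bound'' paragraph, with its Spy Master strategy and potential-function idea, is actually aimed at the open upper bound $V_1 \le B(k-1)+1$. Your diagnosis of the difficulty there is accurate: the generic bound $V_e(M) \le SW_e(M)$ from \cite{BritnellWildonMajority} can be arbitrarily lossy, so a tailored invariant exploiting the special shape $\{2^a,1^{\ldots}\}$ would be needed. But you do not supply one --- the paragraph is a plan (``I anticipate needing a tailored invariant''), not a proof --- and this is exactly why the statement remains a conjecture. In short, once the direction labels are corrected, your proposal reproduces the easy half (more elaborately than necessary) and leaves the hard half open, as does the paper.
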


The conjecture is true when $a=0$ since $V_1( \{1^{2k-1}\}) = K(2k-1,k) = B(k-1) + 1$. Moreover, since
the position $\{2^a,1^{2k-2a-1} \}$ may arise in a Binary Knight Hunt, we have
$V_1( \{ 2^a, 1^{2k-2a-1} \} \ge B(k-1) +1$ for all~$a$. 
The conjecture has been
checked for $k \le 20$ using the program \texttt{MajorityGame.hs} 
available from the author's website. 
One motivation for the conjecture is
the following corollary which strengthens part of Theorem~\ref{thm:spiescomb}.

\begin{corollary}[Conditional on Conjecture~\ref{conj:val21}]\label{cor:cor}
Let $k$ be even and let $n = 2k-1$.
Suppose that spies are unconstrained and that a spy is known to be present.
If $k \ge 4$ then there is no questioning strategy that will both find a knight by
question $K(n,k) = n-1-B(k-1)$ 
and find a spy by question $\c{T}_S(n,k) = n-1$. 
\end{corollary}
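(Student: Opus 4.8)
The plan is to mimic, in the general setting, the argument used for Lemma~\ref{lemma:spiescomb74}, replacing the explicit enumeration of questioning sequences with a structural observation drawn from Conjecture~\ref{conj:val21}. Suppose for contradiction that there is a questioning strategy that finds a knight by question $K(n,k) = n-1-B(k-1)$ and finds a spy by question $\c{T}_S(n,k) = n-1$, working against unconstrained spies with a spy known to be present. First I would have the Spy Master play so as to force the Interrogator to use essentially all of his allotted budget to find a knight: since the hypothesis requires a knight by question $K(n,k)$, and $K(2k-1,k) = B(k-1) + 1$ corresponds to a final position with $V_1(\{1^{2k-1}\}) = B(k-1)+1$ components, the Interrogator has no slack — the question graph just before question $K(n,k)$ must be one from which a knight is found in a single further question, i.e. a position $M$ with $V_1(M) = B(k-1)+1$ but from which no knight can yet be identified, and question $K(n,k)$ must achieve the required reduction in $V_1$.

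The key step is then to analyse what question $K(n,k)$ must look like. As in Lemma~\ref{lemma:spiescomb74}, the bottleneck is that to drop $V_1$ to $B(k-1)+1$, the Interrogator must connect two components of equal weight: if the position just before question $K(n,k)$ contains components of weights $2^a$ and $2^a$ (the only configuration forcing a further merge), then Conjecture~\ref{conj:val21} — applied to the multiset $\{2^a, 1^{2k-2a-1}\}$ or the relevant variant that can arise — guarantees $V_1(M) = B(k-1)+1$, so such a merge is unavoidable for the knight-finding target to be met. Now the Spy Master exploits the unconstrained spies: when the Interrogator asks a vertex $x$ in one size-$2^a$ component about a vertex $y$ in the other, the Spy Master looks at whether the new edge points into a source vertex or a sink vertex of the resulting merged component. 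If it points into a source vertex, the Spy Master accuses, and I would argue (exactly as in the $n=7$ case) that the Interrogator can no longer identify a knight by question $K(n,k)$ — the accusatory component now has at least as many spies as knights, and the remaining weights do not single anyone out. If it points into a sink vertex, the Spy Master supports; this may even reveal a knight early, but now the merged component has a source vertex that could be a spy, and I would show that pinning it down (hence finding any spy at all) requires more than $\c{T}_S(n,k) = n-1$ questions, because resolving the ambiguity in that component costs the Interrogator a question he cannot afford once the rest of the graph must also be assembled into $\le 2$ components.

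The main obstacle I anticipate is the first half of the key step: showing that the position immediately before question $K(n,k)$ \emph{must} contain two equal-weight components that are forced to merge — in other words, that the Interrogator has no alternative route to a knight that avoids the vulnerable configuration. For $n=7$ this was checked by hand (two sequences only); in general this is precisely where Conjecture~\ref{conj:val21} is needed, since it pins the value $V_1(\{2^a,1^{2k-2a-1}\}) = B(k-1)+1$ and thereby rules out any "cheaper" position. I would also need to be careful that $k$ even (so $n = 2k-1$ is odd and $k-1$ is odd, whence $B(k-1) \ge 1$ and the arithmetic $K(n,k) = n-1-B(k-1)$ with $K(n,k) + 1 \le n - 1$ leaves the Interrogator exactly one spare question) genuinely produces the tight budget the argument exploits; the hypothesis $k \ge 4$ is there to exclude the degenerate small cases (as in Lemma~\ref{lemma:spiescomb74}, where $k=4$ is the base). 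Once the forced-merge configuration is established, the Spy Master's source/sink dichotomy is a routine adaptation of the $n=7$ argument.
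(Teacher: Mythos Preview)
Your proposal has the right instinct (mimic Lemma~\ref{lemma:spiescomb74}), but the timing of the Spy Master's intervention is wrong, and this is a genuine gap rather than a detail.

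You want the Spy Master to play vaguely ``optimally'' for $K(n,k)-1$ questions and then analyse the position just before question $K(n,k)$, arguing that it must contain two equal-weight components forced to merge. But Conjecture~\ref{conj:val21} only gives the value $V_1$ for multisets of the very special shape $\{2^a,1^{2k-2a-1}\}$; it says nothing about an arbitrary position that could arise after $K(n,k)-1$ questions with unspecified Spy Master replies. So the conjecture cannot be used, as you hope, to pin down the structure of the late position or to force a merge of two components of size $2^a$. Moreover, at that late stage the components may be large and complicated, so the source/sink dichotomy from Lemma~\ref{lemma:spiescomb74} (which relies on each component being a single directed edge) is not available.

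The paper's proof avoids all of this by intervening \emph{early}: the Spy Master supports until the very first question that does not connect two singletons. At that moment the multiset of sizes is exactly $\{2^a,1^{2k-2a-1}\}$ for some $a$, and the critical question either joins a size-$2$ component to a singleton, or joins two size-$2$ components. In the first case the Spy Master accuses and then announces that henceforth spies always lie; the resulting weight multiset is $\{2^{a-1},1^{2k-2a-1}\}$, to which the conjecture applies with $k$ replaced by $k-1$, and the parity hypothesis enters precisely here: $k$ even gives $B(k-2)=B(k-1)-1$, so $V_1$ drops by one and the Interrogator is one question short of the knight target. In the second case (two size-$2$ components) the source/sink dichotomy is well-posed because each component is a single edge; an accusation into a source reduces to the previous case, while a support into a sink leaves two unresolved source vertices, and a separate counting argument shows that identifying a spy then forces a cycle in the question graph, hence at least $n$ questions.

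In short: the missing idea is to trigger the Spy Master's response at the first non-trivial question, so that the position is still of the exact form covered by Conjecture~\ref{conj:val21}. Your late-intervention plan cannot be made to work with the conjecture as stated, and you have also misread the role of the hypothesis that $k$ is even.
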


\begin{proof}
The Spy Master should support until the Interrogator asks a question
that does not connect two singleton components. When this happens
the multiset of component sizes is $\{2^a, 1^{2k-2a-1} \}$ for some $a \in \N$.
If the question connects a component of size $2$ with a component of size $1$ then
the Spy Master accuses, and then promises the Interrogator that spies
lie in all their answers. The multiset of component weights in the resulting
majority game is $\{2^{a-1},1^{2k-2a-1} \}$.
Since $k$ is even, $B(k-2) = B(k-1) - 1$ and so, 
by the conjecture, $V_1( \{2^{a-1},1^{2k-2a-1} \} = V_1( \{2^a ,1^{2k-2a-1} \}) -1$.
The Interrogator is therefore unable to find a knight by question $K(n,k)$.

If the question connects two components of size $2$ then, as in the proof of Lemma~\ref{lemma:spiescomb74},
the Spy Master accuses if the new edge is into a source vertex, and supports if the new edge is into
a sink vertex. In the former case, the Spy Master can promise the Interrogator
that the component just created has exactly three knights and one spy, and
so corresponds to the position $\{2^{a-1},1^{n-2a} \}$ in the majority game.
The argument in the previous paragraph then applies. In the latter case, let $v$
and $v'$ be the source vertices in the new component. The Spy
Master should support on all further questions. The
Interrogator must, in some later question, ask a knight, say Person $w$, for the identity of 
either $v$ or $v'$. Suppose without loss of generality that $v$ is identified.
If $w$ and $v$ are in the same component this creates a cycle in the question graph.
Otherwise
the component $C$ of $w$ has a source vertex (other than $w$ itself, since there
are no accusations in the question graph), which the Interrogator must identify. Again
this creates a cycle. Hence the final question graph  has at least $n$ edges.
\end{proof}

\subsection{Combined games}

In the following problem we change the
victory condition in Theorem~\ref{thm:liarscomb} to combine two of
the searching games considered in this paper in a different way.
The analogous problem replacing $\c{T}_L(n,k)$ with $\n{T}_L(n,k)$ is also of interest.

\begin{problem}
Suppose that spies always lie and that a spy is known to be present. 
Consider the searching game where the Interrogator wins
if he either finds a knight by question $K(n,k) - 1$, or a spy by question $\c{T}_L(n,k)-1$.
When is this game winning for the Interrogator?
\end{problem}

Theorem~\ref{thm:spiescomb} and the conditional Corollary~\ref{cor:cor} invite
the following question.
Again the analogous problem replacing $\c{T}_S(n,k)$ with $\n{T}_S(n,k)$ is also of interest.

\begin{problem}\label{problem:spiesboth}
Suppose that spies are unconstrained and that a spy is known to be present. 
When is there a questioning strategy that
finds a knight by question $K(n,k)$ and a spy by question $\c{T}_S(n,k) = n-1$?
\end{problem}

The final paragraph of the proof of Theorem~\ref{thm:spiescomb} shows
that there is such a questioning strategy when $n = 2^e+1$ and $k = 2^{e-1}+1$, 
for any \hbox{$e \in \N$}. This reflects the ease with which the Interrogator may meet
the target $K(n,k) = n-2$ for finding a knight. We remark that Example~\ref{ex:ex}
shows one situation in which the Switching Knight Hunt is effective even when
spies are unconstrained; it might  be useful in this problem.

\def\cprime{$'$} \def\Dbar{\leavevmode\lower.6ex\hbox to 0pt{\hskip-.23ex
  \accent"16\hss}D} \def\cprime{$'$}
\providecommand{\bysame}{\leavevmode\hbox to3em{\hrulefill}\thinspace}
\providecommand{\MR}{\relax\ifhmode\unskip\space\fi MR }
\providecommand{\MRhref}[2]{%
  \href{http://www.ams.org/mathscinet-getitem?mr=#1}{#2}
}
\providecommand{\href}[2]{#2}

\end{document}